\newcommand{\ignore}[1]{}
\newtheorem{theorem}{Theorem}[section]
\newtheorem{lemma}[theorem]{Lemma}
\newtheorem{corollary}[theorem]{Corollary}
\newcommand{\Proof}[1]
        {
        \noindent
        \emph{Proof #1.}~
        }
\newsavebox{\smallProofsym}                     
\newcommand{\smalleop}[1]
        {
        \mbox{} \hfill #1~~\usebox{\smallProofsym}\!\!\!\!\!\!\
        }
\newcommand{\ZZ}{\ensuremath{\mathbb Z}}
\newcommand{\RR}{\ensuremath{\mathbb R}}
\newcommand{\TT}{\ensuremath{\mathbb T}}
\newcommand{\lattice}{\mathcal P}
\newcommand{\curves}{\Gamma}
\newcommand{\planes}{\Pi}
\newcommand{\spheres}{\mathcal O}
\newcommand{\vrts}{\Pi}
\newcommand{\pts}{\mathcal P}
\def\eps{{\varepsilon}}
\newcommand{\parag}[1]{\vspace{2mm}

\noindent{\bf #1} }
\begin{document}

\begin{frontmatter}[classification=text]

\title{Lower bounds for incidences with hypersurfaces} 

\author[pgom]{Adam Sheffer}

\begin{abstract}
We present a technique for deriving lower bounds for incidences with hypersurfaces in $\RR^d$ with $d\ge 4$.
These bounds apply to a variety of hypersurfaces, including hyperplanes, hyperspheres, paraboloids, hypersurfaces of any degree, and sets that are not algebraic.

Beyond being the first non-trivial lower bounds for various incidence problems, our bounds show that some of the known upper bounds for incidence problems in $\RR^d$ are tight up to an extra $\eps$ in the exponent.
Specifically, for every $m$, $d\ge 4$, and $\eps>0$ there exist $m$ points and $n$ hypersurfaces in $\RR^d$ (where $n$ and $m$ satisfy a specific relation) with no $K_{2,d/\eps}$ in the incidence graph and $\Omega\left(m^{(2d-2)/(2d-1)}n^{d/(2d-1)-\eps} \right)$ incidences.
We also provide improved lower bounds for the case of no $K_{s,s}$ in the incidence graph, for a large constant $s$.

Our analysis builds upon ideas from a recent work of Bourgain and Demeter \cite{BD15} on discrete Fourier restriction to the four- and five-dimensional spheres.
Specifically, it is based on studying the additive energy of the integer points in a truncated paraboloid.
\end{abstract}
\end{frontmatter}



\section{Introduction}

Given a set $\pts$ of points and a set $\vrts$ of geometric objects (for example, one might consider lines, circles, or hyperplanes) in  $\RR^d$, an \emph{incidence} is a pair $(p,V)\in \pts \times \vrts$ such that the point $p$ is contained in the object $V$.
The number of incidences in $\pts\times\vrts$ is denoted as $I(\pts,\vrts)$.
We sometimes associate the set of incidences with a bipartite graph, called the \emph{incidence graph} of $\pts \times \vrts$.
This graph has vertex sets $\pts$ and $\vrts$, and an edge for every incidence.
In incidence problems, one is usually interested in the maximum number of incidences in $\pts \times \vrts$, taken over all possible sets $\pts,\vrts$ of a given size (equivalently, the maximum number of edges that the incidence graph can contain).
Such incidence bounds have many applications in a variety of fields.\footnote{For a few recent examples, see Guth and Katz's distinct distances result \cite{GK15}, a number theoretic result by Bombieri and Bourgain \cite{Bb15}, and several works in harmonic analysis by Bourgain and Demeter, such as \cite{BD15}.}
The following theorem is one of the main results in incidence theory.

\begin{theorem} \label{th:PS} {\bf (Pach and Sharir \cite{PS92,PS98})}
Let $\pts$ be a set of $m$ points and let $\curves$ be a set of $n$ algebraic curves of degree at most $D$, both in $\RR^2$, such that the incidence graph of $\pts\times\curves$ contains no copy of the complete bipartite graph $K_{s,t}$.
Then
$$I(\pts,\curves) = O_{D,s,t}\left(m^{s/(2s-1)}n^{(2s-2)/(2s-1)}+m+n \right).$$
\end{theorem}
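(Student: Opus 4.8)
The plan is to combine two elementary double-counting estimates with a space-decomposition argument (a cutting). Write $i_\gamma=|\{p\in\pts:p\in\gamma\}|$ and $i_p=|\{\gamma\in\curves:p\in\gamma\}|$, so $I(\pts,\curves)=\sum_\gamma i_\gamma=\sum_p i_p$. First I would record two \emph{weak bounds}. (A) Since no $s$ points lie on $t$ common curves, $\sum_\gamma\binom{i_\gamma}{s}\le(t-1)\binom ms$; the curves with fewer than $s$ incidences contribute $O(sn)$ to $I(\pts,\curves)$, and applying the power-mean inequality to the remaining curves yields $I(\pts,\curves)\ll_{s,t}mn^{1-1/s}+n$. (B) By B\'ezout's theorem, two curves of degree at most $D$ with no common component meet in $O(D^2)$ points; after passing to irreducible components — which replaces $\curves$ by $O_D(n)$ irreducible curves and the forbidden $K_{s,t}$ by a forbidden $K_{s,O_D(t)}$, inflating the final bound only by a constant — we get $\sum_p\binom{i_p}{2}=O_D(n^2)$, and Cauchy--Schwarz gives $I(\pts,\curves)\ll_D m^{1/2}n+m$. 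The same B\'ezout estimate bounds the complexity of the arrangement of the (now irreducible) curves by $O_D(n^2)$, so that it admits $(1/r)$-cuttings into $O_D(r^2)$ cells of bounded description complexity, each meeting at most $n/r$ of the curves; this cutting is the only substantial external ingredient.

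Next I would dispose of the two extreme regimes. If $m\le n^{1/s}$, bound (A) already gives $I(\pts,\curves)\ll_{s,t}n$, absorbed by the $n$ term; if $m\ge n^2$, bound (B) gives $I(\pts,\curves)\ll_D m$, absorbed by the $m$ term. So assume $n^{1/s}\le m\le n^2$.

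For the main regime, set $r:=\bigl\lceil m^{s/(2s-1)}n^{-1/(2s-1)}\bigr\rceil$; the regime assumption guarantees $1\le r\le n$. Take a $(1/r)$-cutting as above, assign every point of $\pts$ to a cell containing it, and let $m_\tau$ be the number of points assigned to cell $\tau$, so $\sum_\tau m_\tau\le m$. Any curve through a point of $\tau$ meets $\tau$, hence is one of the $\le n/r$ curves crossing $\tau$, so bound (A) applied inside $\tau$ gives $I_\tau\ll_{s,t}m_\tau(n/r)^{1-1/s}+n/r$. Summing over the $O_D(r^2)$ cells,
\[
I(\pts,\curves)\ \ll_{s,t,D}\ (n/r)^{1-1/s}\sum_\tau m_\tau+r^2\cdot\frac nr\ \ll_{s,t,D}\ m\Bigl(\frac nr\Bigr)^{1-1/s}+rn .
\]
For the chosen $r$ both terms are $\Theta\bigl(m^{s/(2s-1)}n^{(2s-2)/(2s-1)}\bigr)$, which is the asserted bound.

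The hard part will be the honest treatment of the cutting. First, one must justify that bounded-degree algebraic curves admit cuttings whose cells have bounded description complexity — this rests on the B\'ezout bound for the arrangement together with the vertical-decomposition and random-sampling machinery of Clarkson--Shor and Chazelle. Second, one must account for incidences with points lying on cell boundaries, i.e.\ on the $O(r\,\mathrm{polylog}\,r)$ curves used to build the cutting; to avoid an extra logarithmic (or $\eps$) factor in the final bound one either invokes an optimal, logarithm-free cutting for curves, or disposes of these boundary incidences by a single round of recursion on the number of curves — legitimate since $O(r\,\mathrm{polylog}\,r)<n$ through most of the main regime, with the top end $m$ close to $n^2$ absorbed instead by bound (B). Everything else is the routine optimization performed above.
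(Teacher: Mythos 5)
The paper merely cites Theorem~\ref{th:PS} as a black box from Pach and Sharir and contains no proof of it, so there is nothing internal to compare your sketch against. Taken on its own terms, your outline is the standard cutting proof and most of it is sound: estimates (A) and (B) are correct (after deduplicating shared components, (B) also acquires a dependence on $s,t$, not only on $D$, which is harmless), the two boundary regimes $m\le n^{1/s}$ and $m\ge n^2$ are handled correctly, and the choice $r=\lceil m^{s/(2s-1)}n^{-1/(2s-1)}\rceil$ balances $m(n/r)^{1-1/s}$ against $rn$ exactly as you claim, both coming out to $\Theta\bigl(m^{s/(2s-1)}n^{(2s-2)/(2s-1)}\bigr)$.

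The genuine gap is the one you flag but then resolve too casually. A logarithm-free $(1/r)$-cutting with $O(r^2)$ cells for an arrangement of bounded-degree algebraic curves is not a consequence of the B\'ezout bound on arrangement complexity; it is a substantial theorem of its own (Chazelle's hierarchical cutting, carried to curves via vertical decomposition), and if you use it you should cite it squarely rather than call it a routine corollary. Your fallback --- a cutting with $O(r^2\,\mathrm{polylog}\,r)$ cells plus \emph{one} round of recursion on the $O(r\,\mathrm{polylog}\,r)$ boundary curves, with bound (B) covering the top of the range --- does not close. In the band $n^2/\mathrm{polylog}(n)\le m\le n^2$ the recursive instance has $r\,\mathrm{polylog}\,r\asymp n$ curves, so it is not lower order; and bound (B) gives $m^{1/2}n+m$, which, since $s/(2s-1)=1/2+1/(2(2s-1))>1/2$, exceeds the target term $m^{s/(2s-1)}n^{(2s-2)/(2s-1)}+m$ by a positive power of the polylog precisely in that band. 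So either invoke the optimal cutting theorem as a genuine prerequisite, or swap the cutting for a degree-$O(r)$ polynomial partitioning in the style of Guth--Katz: this yields $O(r^2)$ cells cleanly, each curve crosses $O(r)$ of them by B\'ezout, the same choice of $r$ balances the terms, and the points lying on the zero set of the partitioning polynomial form a lower-dimensional instance that can be disposed of directly. Both routes work, but as written ``the only substantial external ingredient'' understates how much machinery you are importing, and the single-recursion patch leaves a visible hole near $m=n^2$.
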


Better bounds are known for some specific types of curves, such as circles and axis-parallel parabolas.
In 2010 Guth and Katz \cite{GK15} introduced the \emph{polynomial partitioning} technique to handle incidences with lines in $\RR^3$.
Since then, this technique has been extended by a series of papers, yielding general bounds for incidences in any dimension.

\begin{theorem} \label{th:UpperBounds}
Let $\pts$ be a set of $m$ points and let $\vrts$ be a set of $n$ varieties of degree at most $D$, both in $\RR^d$, such that the incidence graph of $\pts\times \vrts$ contains no copy of $K_{s,t}$. \\
(a) (Zahl \cite{Zahl13}; see also Kaplan, Matou\v sek, Safernov\'a, and Sharir \cite{KMSS12}) When $d=3$ and every three varieties of $\vrts$ have a zero-dimensional intersection, we have
\[I(\pts,\vrts) = O_{D,s,t}\left(m^{2s/(3s-1)}n^{(3s-3)/(3s-1)}+m+n \right). \]
(b) (Basu and Sombra \cite{BS14}) When $d=4$ and every four varieties of $\vrts$ have a zero-dimensional intersection, we have
\[ I(\pts,\vrts) = O_{D,s,t}\left(m^{3s/(4s-1)}n^{(4s-4)/(4s-1)}+m+n \right). \]
(c) (Fox, Pach, Sheffer, Suk, and Zahl \cite{FPSSZ14}) For any $d\ge 3$ and $\eps>0$, and with no intersection-related restrictions, we have
\[I(\pts,\vrts) = O_{D,s,t}\left(m^{(d-1)s/(ds-1)+\eps}n^{(ds-d)/(ds-1)}+m+n \right). \]
\end{theorem}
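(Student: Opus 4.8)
The plan is to prove all three parts by a single scheme --- the polynomial partitioning method of Guth and Katz --- and to locate the differences between (a), (b), and (c) in the treatment of incidences that occur \emph{on} the partition hypersurface. Throughout, the whole argument runs as a joint induction on $m$ and $n$, with the constants allowed to depend on $D$, $s$, $t$, $d$, and with a two-dimensional incidence bound of the type of Theorem~\ref{th:PS} at the bottom of the recursion.

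\textbf{Partitioning.} Fix a parameter $r$ to be chosen later. By the polynomial partitioning theorem there is an $f\in\RR[x_1,\dots,x_d]$ of degree $O(r)$ whose zero set $Z=Z(f)$ partitions $\RR^d\setminus Z$ into $O(r^d)$ open connected cells, each meeting at most $O(m/r^d)$ points of $\pts$. Writing $\pts_0=\pts\cap Z$, $\pts_\tau$ for the points in a cell $\tau$, and $\vrts_\tau$ for the varieties of $\vrts$ that meet $\tau$, we split $I(\pts,\vrts) = I(\pts_0,\vrts) + \sum_{\tau} I(\pts_\tau,\vrts_\tau)$ and bound the two pieces separately.

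\textbf{The cell term.} A variety of $\vrts$ that is not a component of $Z$ crosses only $O_D(r^{d-1})$ cells, by a Milnor--Thom-type bound on the number of connected pieces cut out of a bounded-degree variety by a degree-$O(r)$ partition; hence $\sum_\tau |\vrts_\tau| = O_D(n r^{d-1})$. Inside each cell the incidence graph is still $K_{s,t}$-free, so the K\H{o}v\'ari--S\'os--Tur\'an bound gives $I(\pts_\tau,\vrts_\tau) = O_{s,t}\bigl(|\pts_\tau|^{1-1/s}|\vrts_\tau| + |\pts_\tau| + |\vrts_\tau|\bigr)$. Summing over the $O(r^d)$ cells, bounding each $|\pts_\tau|$ by $O(m/r^d)$, and applying H\"older's inequality to $\sum_\tau|\vrts_\tau|$, the cell contribution is
\[ O_{D,s,t}\!\left( (m/r^d)^{1-1/s}\, n\, r^{d-1} + m + n\, r^{d-1} \right). \]
Choosing $r$ to balance this against the bound obtained for the surface term yields, in each of the three cases, exactly the exponent pair asserted in the theorem (with the $+m+n$ terms dominated in the relevant ranges of $m,n$).

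\textbf{The surface term --- the crux.} Incidences with points on $Z$ come in two kinds. First, varieties \emph{contained} in $Z$: there are only $O(r)$ irreducible components to absorb, and a separate $K_{s,t}$-free argument (peeling off components, inducting on the degree) controls their contribution. Second, a variety not contained in $Z$ meets $Z$ in a variety of dimension at most $d-2$; regarding $Z$ itself as the ambient space and these traces as the new objects, one recurses one dimension lower, bottoming out at a planar bound in the spirit of Theorem~\ref{th:PS}. For $d=3$ and $d=4$ the hypothesis that every $d$ varieties meet in a zero-dimensional set is precisely what keeps this one-step descent from blowing up and produces the clean exponents of (a) and (b). In case (c) no such transversality is assumed, so the descent is not clean: one re-partitions on $Z$, then on the next surface, and so on a bounded --- but, as $\eps\to0$, growing --- number of times, each round costing a factor $r^{O(1)}$ that is ultimately absorbed into the extra $\eps$ in the exponent. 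The main obstacle is exactly this recursion: arranging the induction so that at every level the surface term is dominated by the cell term for the chosen $r$, and verifying that the accumulated loss over a growing number of rounds is only $m^{\eps}$ and nothing worse. Once that bookkeeping is in place, combining the cell and surface estimates and optimizing $r$ finishes the proof.
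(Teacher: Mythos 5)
This theorem is not proved in the paper: it is a survey of three results cited from Zahl \cite{Zahl13}, Basu and Sombra \cite{BS14}, and Fox, Pach, Sheffer, Suk, and Zahl \cite{FPSSZ14}, so there is no ``paper's own proof'' to compare against. Your outline does correctly identify the polynomial-partitioning framework that all three cited works use, including the split into a cell term and a surface term, the role of the transversality hypotheses in (a) and (b), and the reason part (c) carries an extra $\eps$.

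That said, there is a concrete quantitative gap in the cell-term computation. With the convention that ``no $K_{s,t}$'' puts $s$ on the $\pts$ side, the K\H{o}v\'ari--S\'os--Tur\'an bound in each cell is $I(\pts_\tau,\vrts_\tau)=O_{s,t}\bigl(|\pts_\tau|\,|\vrts_\tau|^{1-1/s}+|\vrts_\tau|\bigr)$, not $|\pts_\tau|^{1-1/s}|\vrts_\tau|$ as you wrote. With the correct orientation, summing over cells via H\"older gives $O\bigl(m\,n^{1-1/s}r^{-(s-1)/s}+n r^{d-1}\bigr)$, and balancing the two terms produces $r=m^{s/(ds-1)}n^{-1/(ds-1)}$, at which point both terms equal exactly $m^{(d-1)s/(ds-1)}n^{(ds-d)/(ds-1)}$ --- the stated bound in (c). Your version yields $m^{(s-1)/s}n\,r^{(d-s)/s}$, whose balance against $nr^{d-1}$ forces $r=m^{1/d}$ and gives the weaker $n\,m^{(d-1)/d}$. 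Relatedly, the claim that the $n r^{d-1}$ contribution is ``dominated'' is not right: with the correct $r$ it is exactly of the order of the main term and must be carried through the induction rather than discarded; it is only the $+m$ and $+n$ leftovers, and the edge cases $r<1$ or $r>m^{1/d}$, that are absorbed by a case split. Finally, for part (c) the recursion on $Z$ is implemented in \cite{FPSSZ14} via a polynomial-partitioning-on-varieties theorem rather than literal re-partitioning in the ambient space; your description captures the spirit but would need that tool to make the bookkeeping close.
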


Most of the non-trivial lower bounds that are known for incidence problems are for the case of curves (e.g., see Elekes' construction \cite{Elekes01}).
The only non-trivial lower bound that we are aware of for objects of dimension at least two is by Brass and Knauer \cite{BK03}. This is a bound for incidences with hyperplanes with no $K_{s,s}$ in the incidences graph, for a large constant $s$.
The case of large $s$ behaves somewhat differently, and upper bounds better than the ones in Theorem \ref{th:UpperBounds} are known for it (below we present these upper bounds, the lower bound of \cite{BK03}, and an improved lower bound that we derive for this scenario).
It may also be worth mentioning a bound for unit spheres in $\RR^3$ that was observed by Erd\H os \cite{erd60}. This bound is a straightforward extension of the one for the planar unit distances problem \cite{erd46}.

In this paper we provide lower bounds that match the upper bounds of Theorem \ref{th:UpperBounds} (up to an extra $\eps$ in the exponent) in dimension $d\ge 4$, with no $K_{2,t}$ in the incidence graph (for a constant $t$ that depends on $\eps$ and $d$), and with $n$ satisfying a specific relation with $m$.

\begin{theorem} \label{th:LowerBounds}
For any $m$, $d\ge 4$, and $\eps>0$, there exists a set $\pts$ of $m$ points and a set $\planes$ of $n=\Theta\left(m^{(3-3\eps)/(d+1)}\right)$ hyperplanes in $\RR^d$, such that the incidence graph of $\pts\times\planes$ contains no copy of $K_{2,(d-1)/\eps}$ and with $I(\pts,\planes)=\Omega\left(m^{(2d-2)/(2d-1)}n^{d/(2d-1)-\eps} \right)$. This bound remains valid when replacing the hyperplanes with hyperspheres, or with any linearly-closed family of graphs (see definition below).
\end{theorem}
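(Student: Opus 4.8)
The plan is to construct the point set and hyperplane set from integer points, using the additive energy of integer points on a truncated paraboloid as the engine. Concretely, I would take the point set $\pts$ to be a box of integer points in $\RR^d$, say $\pts = \{1,\dots,N\}^{d}$ (or a suitable variant, possibly with the first $d-1$ coordinates ranging over $\{1,\dots,N\}$ and the last coordinate over a larger range to absorb the paraboloid values), so that $m = |\pts| \approx N^{d}$. The hyperplanes will be chosen so that each contains many points of $\pts$: a natural choice is to associate a hyperplane to each point of a truncated paraboloid, or more precisely to take hyperplanes of the form $x_d = a_1 x_1 + \dots + a_{d-1}x_{d-1} + c$ where $(a_1,\dots,a_{d-1})$ ranges over $\{1,\dots,N\}^{d-1}$ and $c$ is determined (or ranges over a short interval). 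Such a hyperplane automatically contains $\approx N^{d-1}$ points of $\pts$, giving a trivial count of $I(\pts,\planes) \gtrsim n N^{d-1}$. The subtlety — and the reason the paraboloid enters — is controlling the $K_{2,t}$ condition: two points lie on many common hyperplanes exactly when there are many solutions to a linear system, and bounding the number of such coincidences is precisely an additive-energy estimate for the integer points in a truncated paraboloid, which is where I would invoke the Bourgain--Demeter-type bound in dimensions $d \ge 4$ (i.e. $d-1 \ge 3$ paraboloid dimensions, matching their four- and five-dimensional sphere results).

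The key steps, in order: (i) fix the scaling $N$ and the ranges so that $m \approx N^{d}$ and $n \approx N^{d-1}$, then verify that this yields the claimed relation $n = \Theta(m^{(3-3\eps)/(d+1)})$ — this will force a specific, $\eps$-dependent choice of how the box dimensions compare (the exponent $(3-3\eps)/(d+1)$ suggests the paraboloid lives in roughly $3$ effective dimensions after truncation, consistent with the Bourgain--Demeter input). (ii) Lower-bound $I(\pts,\planes)$ by the trivial incidence count coming from the paraboloid structure. (iii) Prove the $K_{2,(d-1)/\eps}$-freeness: show that no two points of $\pts$ are simultaneously incident to $(d-1)/\eps$ hyperplanes of $\planes$, by translating "$p,q$ on a common hyperplane" into an additive/linear equation among the defining parameters and bounding the number of repeated solutions via the additive energy estimate for integer points on the truncated paraboloid. (iv) Recompute the incidence bound in terms of $m$ and $n$ using the relation from (i) to match $\Omega(m^{(2d-2)/(2d-1)}n^{d/(2d-1)-\eps})$, with the $\eps$ loss arising exactly from the $\eps$-dependent truncation. (v) Extend to hyperspheres and to linearly-closed families by a lifting/transfer argument: hyperplanes incidences in $\RR^d$ can be transported to hypersphere incidences via the standard lifting map (a point $x \mapsto (x,|x|^2)$ sends spheres to hyperplanes), and more generally any linearly-closed family contains enough hyperplane-like objects to carry the construction.

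The main obstacle, I expect, is step (iii): getting the additive energy bound for integer points on the truncated paraboloid sharp enough that the $K_{2,t}$ threshold $t$ is only $(d-1)/\eps$ rather than something growing with $N$. This is exactly the point where the recent Bourgain--Demeter machinery is needed — elementary divisor/counting bounds give a weaker energy estimate that would only yield $K_{s,s}$-freeness for large $s$ (which is presumably why the paper also states a separate, weaker result for the $K_{s,s}$ regime). A secondary technical point is bookkeeping the $\eps$: the truncation parameter must be tuned so that the energy bound holds with the loss $N^{\eps}$ (matching the $\eps$ in Theorem \ref{th:UpperBounds}(c)), and one must check this same $\eps$ propagates correctly through the change of variables from $(m,N)$ to $(m,n)$ in step (iv). The extension in step (v) I expect to be routine once the hyperplane case is in hand.
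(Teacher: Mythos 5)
Your proposal has the right ingredients on the shelf (the truncated paraboloid, additive energy, Bourgain--Demeter) but wires them together in a way that does not match the paper and, more importantly, would not close the argument. Let me point to the specific mismatches.

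\textbf{The construction itself is different, and yours fails the $K_{2,t}$ condition.} The paper does not take $\pts$ to be a box of integer lattice points with hyperplanes parameterized by slope vectors in a box. It takes $\pts = S_{n,d}\cap\ZZ^d$, i.e.\ the integer points \emph{on} the truncated paraboloid, and associates to each sum $v=a+b$ (with $a,b\in\pts$) the specific hyperplane $H_v$ of equation \eqref{eq:PiPlane}, which contains both $a$ and $b$. In your box construction, a fixed pair of points $p,q$ is simultaneously incident to an entire $(d-2)$-parameter family of hyperplanes of the form $x_d=a\cdot x+c$ (namely all $(a,c)$ solving the two linear conditions), so the incidence graph is riddled with $K_{2,\Theta(N^{d-2})}$'s. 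That is not fixable by tuning $\eps$; the construction has to be restricted so that two points forbid most hyperplanes, and the paraboloid is exactly what provides that. Concretely, in the paper two hyperplanes of $\planes$ intersect $S_d$ in an $n$-proper ellipsoid of dimension $\le d-3$, and the $K_{t,2}$-freeness comes from Lemma \ref{le:ellipsoids}: lattice-point counts on such ellipsoids via Dirichlet's theorem on binary quadratic forms (a divisor-function bound). Additive energy plays no role at all in establishing the $K$-freeness.

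\textbf{The role of the additive energy bound is inverted.} You propose to use an additive-energy estimate as an \emph{upper} bound to rule out coincidences. The paper uses the additive energy in the opposite direction: Lemma \ref{le: LowerEnergy} gives a Fourier-analytic \emph{lower} bound $E(\pts)=\Omega(n^{3-2/(d-1)})$, and the incidence lower bound is extracted from it by a dyadic decomposition of hyperplanes by richness ($N_k$ = number of hyperplanes holding $\approx 2^k$ representing pairs) and a proof-by-contradiction: if every dyadic level were sparse in the sense of \eqref{eq:NkHyp4}, the resulting upper bound on $E(\pts)$ would contradict the Fourier lower bound. Your step (ii), ``lower-bound $I(\pts,\planes)$ by the trivial incidence count,'' is not available: the bound $\Omega(m^{(2d-2)/(2d-1)}n^{d/(2d-1)-\eps})$ is near-optimal and there is no trivial counting route to it.

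\textbf{Two essential steps are missing.} First, after the energy argument one has a configuration with no $K_{t,2}$ (the ellipsoid bound controls pairs of \emph{hyperplanes}, not pairs of points), and one must apply point-hyperplane duality to swap the roles and obtain no $K_{2,t}$. Second, even then $t=\Theta(n^{(d-4)/(d-1)+c/\lg\lg n})$, which grows with $n$; the reduction to a constant $t = O(d/\eps)$ requires the random sparsification argument of Section \ref{ssec:Chefnoff} (several Chernoff bounds applied to a $p$-random subsample of hyperplanes, with $p=1/(tn^\eps)$). Your sketch does not mention either step, and without the sparsification the theorem as stated (with $t=(d-1)/\eps$ a constant) cannot be reached from your intermediate estimates. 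You are correct that the transfer to hyperspheres is via inversion/lifting and that linearly-closed families are handled by a shear-type bijection $\phi$, so step (v) is accurate in spirit; but steps (i)--(iv) describe a different, and in key places non-functional, argument.
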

\parag{Remarks.} (a) For the above sizes of $m$ and $n$ we have $m+n=o\left(m^{(2d-2)/(2d-1)}\right)$, so this is indeed the dominating term in the bounds of Theorem \ref{th:UpperBounds}. \\
(b) The extra $\eps$ in the exponents can be removed from the theorem at the cost of replacing the condition of no $K_{2,d/\eps}$ in the incidence graph with no $K_{2,3\lg n}$, and adding a factor of $n^{-c/\lg \lg n}$ to the incidence bound (for some constant $c$);\footnote{Unless stated otherwise, the logarithms in this paper are all with base $e$.} see below for more details.
\vspace{2mm}

We define a \emph{graph} in $\RR^d$ to be a hypersurface that is defined by an equation of the form $x_d = f(x_1,\ldots,x_{d-1})$ (where $f:\RR^{d-1}\to \RR$).
Notice that $f$ is not required to be a polynomial or even algebraic.
We say that a family $F$ of graphs is \emph{linearly-closed} if it satisfies the following property:
If $V\in F$ is a graph that is defined by $x_d = f(x_1,\ldots,x_{d-1})$, then for every $a_1,\ldots,a_d\in \RR$ the graph that is defined by $x_d = f(x_1,\ldots,x_{d-1})+a_1x_1+\cdots + a_{d-1}x_{d-1}+a_d$ is also in $F$.
As examples of linearly-closed families of graphs, consider the set of paraboloids in $\RR^d$ that are defined by $x_d = a_1(x_1-b_1)^2+\cdots+a_d(x_1-b_d)^2$ (where $a_1,\ldots,a_d,b_1,\ldots,b_d\in \RR$), the set of graphs that are also algebraic varieties of degree $k$ in $\RR^d$, and the set of curves in $\RR^2$ that are defined by $y=e^{ax}+b\sqrt{x}+cx+d$ (where $a,b,c,d\in \RR$).

Theorem \ref{th:LowerBounds} is a special case of a more general theorem that we prove.
Specifically, Theorem \ref{th:LowerBounds} is immediately obtained from Theorem \ref{th:GeneralBound} by setting $\delta= \frac{2d-2}{2d-1}$.

\begin{theorem} \label{th:GeneralBound}
Consider $\eps,\delta>0$ and positive integers $m$ and $d\ge 4$.
Then there exist a set $\pts$ of $m$ points and a set $\planes$ of $n=\Theta\left(m^{(3-3\eps)/(d+1)}\right)$ hyperplanes, both in $\RR^d$, such that the incidence graph of $\pts\times \planes$ contains no copy of $K_{2,d/\eps}$, and with
\[ I(\pts,\planes)=\Omega\left(m^{\delta}n^{(d+2-\delta(d+1))/3-\eps}\right). \]
This bound remains valid when replacing the hyperplanes with hyperspheres, or with any linearly-closed family of graphs.
\end{theorem}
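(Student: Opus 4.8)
Here is a plan of attack.

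The plan is to: (i) reduce every surface type in the statement to a single model; (ii) build an explicit arithmetic point–surface configuration whose incidence count is immediate; and (iii) extract the $K_{2,t}$ condition from a bound on the additive energy of a truncated paraboloid. For step (i): the class of linearly-closed families of graphs already contains the hyperplanes (take $f\equiv 0$) and the paraboloids (take $f(\vec x)=\|\vec x\|^2$), and the construction below is insensitive to the choice — if $V_0\in F$ is the graph $x_d=f(\vec x)$, then the point $(\vec a,f(\vec a)+z)$ lies on the surface $x_d=f(\vec x)+\vec\alpha\cdot\vec x+\beta$ (which is in $F$ by linear-closedness) exactly when $z=\vec\alpha\cdot\vec a+\beta$, with $f$ cancelling out; hence it suffices to prove the bound for hyperplanes. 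To pass to hyperspheres, apply a sphere inversion of $\RR^d$ centred at a generic point lying on none of the constructed hyperplanes and equal to none of the constructed points: inversion is a bijection of $\RR^d\cup\{\infty\}$ that preserves point–hyperplane/hypersphere incidences, hence preserves the incidence count and the absence of $K_{2,t}$, and it carries the hyperplanes to genuine hyperspheres.

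For the construction (step (ii)): fix a scale $N$, take the points to be a ``thickened'' copy of the integer points on a truncated paraboloid — along the lines of $\pts=\{(\vec a,w):\vec a\in A,\ w\in[H]\}$, where $A$ consists of integer points on a truncated paraboloid in $\RR^{d-1}$ (so hyperplane sections of $A$ are integer points on spheres) and $H$ is a height parameter — and take the hyperplanes to be the graphs $x_d=\vec\alpha\cdot\vec x+\beta$ with $(\vec\alpha,\beta)$ ranging over a box small enough that $\vec\alpha\cdot\vec a+\beta\in[H]$ for every $\vec a\in A$. Then each such hyperplane contains the $|A|$ points $\{(\vec a,\vec\alpha\cdot\vec a+\beta):\vec a\in A\}$ of $\pts$, so $I(\pts,\planes)\ge n|A|$ where $n$ is the number of hyperplanes used; tuning $N$, $H$, $|A|$, and the box so that $n=\Theta(m^{(3-3\eps)/(d+1)})$ then forces, after a routine exponent computation, $I(\pts,\planes)=\Omega\!\left(m^\delta n^{(d+2-\delta(d+1))/3-\eps}\right)$, which at $\delta=\tfrac{2d-2}{2d-1}$ reproduces the exponents of Theorem~\ref{th:UpperBounds} up to $\eps$.

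For step (iii) — the heart of the matter — note that two of the constructed hyperplanes meet $\pts$ in a common point only along the affine hyperplane $(\vec\alpha-\vec\alpha')\cdot\vec x=\beta'-\beta$ of $\RR^{d-1}$, whose normal vector is bounded; thus a $K_{2,t}$ using two hyperplanes forces a bounded-normal hyperplane of $\RR^{d-1}$ through $t$ points of $A$ (the other orientation, two points on $t$ common hyperplanes, is handled analogously and more easily, since the coefficient box can be chosen spread out). Because $A$ sits on a truncated paraboloid $\{(\vec u,\|\vec u\|^2)\}$, a bounded-normal hyperplane section of $A$ is the set of integer points of a box lying on a sphere in $\RR^{d-2}$, and the $\ell^2$-average of this count over all relevant spheres is precisely the additive energy $E=\#\{(\vec u_1,\vec u_2,\vec u_3,\vec u_4):\ \vec u_1+\vec u_2=\vec u_3+\vec u_4,\ \|\vec u_1\|^2+\|\vec u_2\|^2=\|\vec u_3\|^2+\|\vec u_4\|^2\}$ of the truncated paraboloid. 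With the near-optimal bound on $E$ — classical in the lowest value of $d$, the Bourgain–Demeter $\ell^2$-decoupling theorem for the paraboloid in the remaining small dimensions, and elementary lattice-point counting for large $d$ — the number of ``heavy'' hyperplanes is small enough that deleting an $o(1)$-fraction of the surfaces destroys every copy of $K_{2,O(d/\eps)}$ while keeping a constant fraction of the incidences; the $\eps$ in the exponents is the cost of this deletion, and deleting only enough to kill $K_{2,3\lg n}$ costs instead a single $\log$ factor, which is remark~(b).

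I expect the main obstacle to be the near-optimal additive-energy estimate for the truncated paraboloid in the low-dimensional cases, where elementary circle-method heuristics are not decisive and the Bourgain–Demeter decoupling input is essential; a secondary difficulty is to engineer the construction so that each surface captures as many points as Theorem~\ref{th:UpperBounds} permits while no two surfaces (and no two points) are heavy — it is exactly this balancing that pins down both the relation $n=\Theta(m^{(3-3\eps)/(d+1)})$ and the precise exponents in the incidence bound. Once the construction and the energy bound are in hand, the reductions of step (i) and the deletion cleanup of step (iii) are routine.
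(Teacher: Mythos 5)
Your plan has a genuine gap in the central construction, and it also misreads the role the additive energy plays.

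\textbf{The direct construction does not close.}
Take $A=\{(\vec u,\|\vec u\|^2):\vec u\in\ZZ^{d-2},|\vec u|\le R\}\subset\RR^{d-1}$, so $|A|\approx R^{d-2}$, let $H$ be the thickening height, and write the hyperplane coefficients $\vec\alpha=(\alpha_1,\ldots,\alpha_{d-1})$, $\beta$. You need $\vec\alpha\cdot\vec a+\beta\in[H]$ for all $\vec a\in A$, which forces $|\alpha_i|\lesssim H/R$ for $i<d-1$ and, crucially, $|\alpha_{d-1}|\lesssim H/R^2$, since the last coordinate of $\vec a$ is $\|\vec u\|^2\approx R^2$. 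Now run the exponent bookkeeping you describe: with $I=n|A|$, $m=|A|H$, and the target $n\approx m^{3/(d+1)}$, $I\approx m^{(2d-2)/(2d-1)}n^{d/(2d-1)}$, one is forced to $H\approx n^{2/3}$, $|A|\approx n^{(d-1)/3}$, hence $R\approx n^{(d-1)/(3(d-2))}$, and then
$H/R^2\approx n^{2/3-2(d-1)/(3(d-2))}=n^{-2/(3(d-2))}<1$
for every $d\ge 4$. So $\alpha_{d-1}$ has no room to vary and must be identically $0$. Once $\alpha_{d-1}=0$, the quadratic coordinate of $A$ never enters the equations of the hyperplanes: two hyperplanes meet $\pts$ over an \emph{affine flat} in the $\vec u$-variables, not over a sphere, and an affine flat through the lattice box carries $\gg t$ points. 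The paraboloid structure of $A$ is unused, and the $K_{2,t}$ condition fails. This is not a parameter-tuning slip --- it is structural, and it is exactly why the paper does not attempt a direct Elekes-style thickened construction.

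\textbf{The energy bound points the other way, and decoupling is not what the paper needs.}
In the paper the additive-energy input is a \emph{lower} bound, $E(\pts)=\Omega(n^{3-2/(d-1)})$, obtained by an elementary Fourier computation (Lemma~\ref{le: LowerEnergy}), not by decoupling. That lower bound is fed into a dyadic pigeonhole on the number $N_k$ of sum-hyperplanes $H_v$ carrying between $2^k$ and $2^{k+1}$ pairs $a+b=v$; one level $k'\approx\frac{d-3}{d-1}\lg_2 n$ is forced to be rich, and dualizing the rich level gives the point--hyperplane pair with many incidences. The $K_{2,t}$ control is \emph{not} an $\ell^2$/energy statement and \emph{not} obtained by deleting heavy surfaces: it is a pointwise, divisor-type bound (Theorem~\ref{th:BinaryQuad}, Dirichlet) on lattice points of $n$-proper ellipses, bootstrapped to $k$-dimensional ellipsoids in Lemma~\ref{le:ellipsoids}, which yields a hard upper bound $t=\Theta\bigl(n^{(d-4)/(d-1)+c/\lg\lg n}\bigr)$ on the number of paraboloid lattice points in the intersection of any two hyperplanes of the rich level. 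Only after that hard bound is in place does the probabilistic Chernoff argument of Section~\ref{ssec:Chefnoff} thin out the hyperplanes to bring $t$ down to $O(d/\eps)$, at the price of the $\eps$ in the exponents. Your plan inverts this: it wants an \emph{upper} energy bound (via decoupling) and then a deterministic deletion of heavy pairs. Even setting aside the reliance on heavy machinery the paper avoids, an $\ell^2$ upper bound alone does not preclude a few surface pairs from sharing many points in a way that the surviving incidence count cannot absorb, because you have no a priori control on how many surface pairs map to a single rich sphere; the paper sidesteps this entirely with the pointwise Dirichlet bound.

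\textbf{What you do have right.}
The reductions in your step~(i) --- reducing a linearly-closed family to the hyperplane case by the shear $\phi(x)=(x_{<d},x_d+f(x_{<d}))$, and passing from hyperplanes to hyperspheres by inversion about a point off all the hyperplanes --- are exactly the paper's reductions and are fine. Your identification of the additive energy of integer points on a truncated paraboloid as the controlling quantity is also the right object, and your instinct that $K_{2,t}$ should be related to lattice points on spheres cut from a paraboloid is correct in spirit. What is missing is the actual mechanism: the Fourier \emph{lower} bound on the energy, the dyadic-level-plus-duality step that converts it into a rich point--hyperplane configuration, and the Dirichlet divisor bound that gives the hard $K_{t,2}$ restriction before any sampling.
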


\parag{Remarks.} (a) The extra $\eps$ in the exponents can be removed from Theorem \ref{th:GeneralBound} at the cost of replacing the condition of no $K_{2,d/\eps}$ in the incidence graph with no $K_{2,3\lg n}$, and adding a factor of $n^{-c/\lg \lg n}$ to the incidence bound (for some constant $c$). This is immediately obtained by setting $p=1/t$ in the probabilistic argument that is in Section \ref{ssec:Chefnoff}. \\
(b) In Theorem \ref{th:GeneralBound}, our goal was to minimize the value of $t$ in the condition of having no $K_{2,t}$ in the incidence graph (having Theorem \ref{th:LowerBounds} in mind). One can easily change our analysis to obtain bounds for any $t$ between $d/\eps$ and $n^{(d-4)/(d-1)}$ (the case of $t=\Theta\left(n^{(d-4)/(d-1)}\right)$ can be found in Lemma \ref{le:Intermediate}).
\vspace{2mm}

Theorems \ref{th:LowerBounds} and \ref{th:GeneralBound} consider incidences with hypersurfaces, but also apply to surfaces of lower dimensions.
For example, say that we are interested in $d$-dimensional planes in $\RR^{d'}$ (where $d<d'$).
We can choose an arbitrary $(d+1)$-dimensional plane in $\RR^{d'}$ and apply Theorem \ref{th:LowerBounds} or \ref{th:GeneralBound} in it. This might be considered as cheating, since we get a configuration that is not ``truly $d'$-dimensional''.

Our technique is based on a recent work of Bourgain and Demeter \cite{BD15}.
Bourgain and Demeter study discrete Fourier restriction to the four- and five-dimensional spheres, partly by relying on bounds for incidences with hyperplanes in $\RR^4$ and $\RR^5$.
In some sense, we reverse the direction of their analysis to obtain a result about incidences.
Several of the ideas in this paper can be found under disguise in \cite{BD15}.

As already mentioned, for large values of $s$ better bounds are known than the ones in Theorem~\ref{th:UpperBounds}.
\begin{theorem} \label{th:LargeS}
Let $\pts$ be a set of $m$ points and let $\vrts$ be a set of $n$ varieties of degree at most $D$, both in $\RR^d$, such that the incidence graph of $\pts\times\vrts$ contains no copy of $K_{s,s}$ (for some constant $s$). \\
(a) (Brass and Knauer \cite{BK03}; Apfelbaum and Sharir \cite{AS07}) When the elements of $\vrts$ are hyperplanes, we have
\[ I(\pts,\planes) = O\left(m^{d/(d+1)}n^{d/(d+1)} + m + n\right). \]
(b) (Fox, Pach, Sheffer, Suk, and Zahl \cite{FPSSZ14}) For any $d$, $\eps>0$, and set of constant-degree varieties that can be parameterized with $r$ parameters, we have
\[ I(\pts,\planes) = O\left(m^{r(d-1)/(dr-1)+\eps}n^{(r-1)d/(dr-1)} + m + n\right). \]
\end{theorem}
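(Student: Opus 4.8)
\medskip
\noindent\emph{Proof strategy (both estimates are due to the cited authors; we indicate the plan one would follow).}

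\textbf{Part (a).} Point--hyperplane incidences in $\RR^d$ are self-dual (a hyperplane corresponds to its vector of coefficients and a point to its dual hyperplane), so we may assume $m\le n$, and the plan is a space-decomposition argument with three ingredients. The first is to dispose of the unbalanced ranges, where $m$ is either much smaller or much larger than $n$: there the K\H{o}v\'ari--S\'os--Tur\'an estimate $I(\pts,\planes)=O_s\!\left(mn^{1-1/s}+n\right)$, together with its dual, already gives a bound that is absorbed into the additive $m+n$ term. This is the only place where the $K_{s,s}$-free hypothesis is used, and it is exactly what excludes the configuration of many points on a low-dimensional flat through which many hyperplanes pass. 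The second is, in the remaining range, to fix a parameter $r$, take a $(1/r)$-cutting of the arrangement of $\planes$ (a covering of $\RR^d$ by $O(r^d)$ relatively open simplicial cells, each crossed by at most $n/r$ hyperplanes), and split each incidence according to whether its point lies in the interior of a cell---so that its hyperplane crosses that cell---or on the $(d-1)$-dimensional skeleton of the cutting; the interior incidences are estimated cell by cell (each cell still carrying a $K_{s,s}$-free configuration with at most $n/r$ hyperplanes) and summed over the $O(r^d)$ cells using the concavity of $x\mapsto x^{d/(d+1)}$ in the number of points per cell, the arrangement being designed to contribute the main term $m^{d/(d+1)}n^{d/(d+1)}$ plus a lower-order term $O(nr^{d-1})$. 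The third is to handle the skeleton incidences by pushing the points onto the $O(r)$ hyperplanes supporting the skeleton and intersecting each member of $\planes$ with them, thereby reducing to incidences between points and $(d-2)$-flats inside a $(d-1)$-space, which are controlled by induction on $d$ with base case $d=2$ supplied by Theorem~\ref{th:PS}. Choosing $r$ to balance the second and third contributions then gives the claimed bound.

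\textbf{Part (b).} Here one would run the polynomial-partitioning machinery of \cite{FPSSZ14}. Since each $V\in\vrts$ is described by $r$ real parameters, for a fixed point $p$ the set of parameter vectors of those $V$ that pass through $p$ is a bounded-degree variety in $\RR^r$; dualizing, the problem becomes one about $n$ points and $m$ bounded-degree hypersurfaces in $\RR^r$ forming a $d$-dimensional family, with the same $K_{s,s}$-free incidence graph. One then partitions $\RR^r$ by a polynomial of degree $D$: the $O(D^r)$ cells each contain few points, each bounded-degree hypersurface meets only $O(D^{r-1})$ of them, one recurses inside the cells, and the points lying on the zero set of the partitioning polynomial are treated by a separate induction on the ambient dimension. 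The $d$-dimensional structure of the family, together with a K\H{o}v\'ari--S\'os--Tur\'an estimate at the leaves of the recursion (this is where the $K_{s,s}$-free hypothesis enters), pins down the exponents; the dimension-reduction step is what forces the extra $m^\eps$; and optimizing $D$ yields the stated bound.

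\textbf{Main obstacle.} In both parts the delicate point is the boundary of the decomposition---the skeleton of the cutting in (a), the zero set of the partitioning polynomial in (b)---where one must recurse on the dimension without degrading the exponent. In (a) a naive cell-by-cell recursion does not close, since it reproduces the target bound with a larger constant, and the uniform-in-$s$ statement (in particular the regime of large $s$, where the K\H{o}v\'ari--S\'os--Tur\'an input inside the cells is by itself too weak to yield $(mn)^{d/(d+1)}$) really requires the more careful bookkeeping of \cite{BK03,AS07}. In (b) the same boundary step is precisely what produces the unavoidable $m^\eps$ loss, while the bounded-degree hypothesis is what keeps the degrees of the dual hypersurfaces, and hence all the B\'ezout-type counts in the partitioning argument, under uniform control.
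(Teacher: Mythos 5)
The paper does not prove Theorem~\ref{th:LargeS}; it states it purely as a survey of known results, attributing part~(a) to Brass--Knauer \cite{BK03} and Apfelbaum--Sharir \cite{AS07} and part~(b) to Fox, Pach, Sheffer, Suk, and Zahl \cite{FPSSZ14}, and then uses these bounds only as a point of comparison for the lower-bound constructions. There is therefore no ``paper's own proof'' to compare your sketch against, and nothing in the paper to be reproduced here.

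That said, your outline is a reasonable account of the strategies in the cited references: for (a), a cutting-based recursion with K\H{o}v\'ari--S\'os--Tur\'an at the bottom and dimension reduction on the skeleton is indeed the right framework (Apfelbaum--Sharir phrase the result contrapositively, deducing a large $K_{a,b}$ from too many incidences, but the underlying decomposition is of the type you describe), and for (b), the dualization to the $r$-dimensional parameter space followed by polynomial partitioning and an induction on ambient dimension for the zero-set points is precisely the engine of \cite{FPSSZ14}. One place where your sketch of (a) is a bit loose: summing a K\H{o}v\'ari--S\'os--Tur\'an bound over the cells of a single $(1/r)$-cutting and then optimizing $r$ does not, by itself, reach the exponent $d/(d+1)$ for large $s$ --- one genuinely needs the recursive use of the cutting (or equivalently the ``merging'' of the two terms across scales) as in \cite{BK03,AS07}, which you gesture at but do not pin down. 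Since the theorem is not proved in the paper, this is an observation about the literature rather than a discrepancy with the source.
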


Notice that part (b) of the theorem is a generalization of part (a) (up to the extra $\eps$ in the exponent) since for hyperplanes we have $r=d$. Brass and Knauer \cite{BK03} also presented a configuration of $m$ points and $n$ hyperplanes in $\RR^d$ with no $K_{s,s}$ in the incidence graph, for some large $s$. When $d=3$ this configuration has $\Omega\left((mn)^{7/10}\right)$ incidences, when $d>3$ is odd there are $\Omega\left((mn)^{1-2/(d+3)-\eps}\right)$ incidences, and when $d>3$ is even there are $\Omega\left((mn)^{1-2(d+1)/(d+2)^2-\eps}\right)$ incidences.
Theorem \ref{th:GeneralBound} immediately implies the following improved bound for any $d\ge 4$ (but for $m$ and $n$ that satisfy a specific relation) by setting $\delta=\frac{d+2}{d+4}$.

\begin{corollary} \label{co:LargeS}
Consider an $\eps>0$, and positive integers $m$ and $d\ge 4$.
Then there exist a set $\pts$ of $m$ points and a set $\planes$ of $n=\Theta\left(m^{(3-3\eps)/(d+1)}\right)$ hyperplanes, both in $\RR^d$, such that the incidence graph of $\pts\times \planes$ contains no copy of $K_{2,d/\eps}$, and with $I(\pts,\planes)=\Omega\left((mn)^{1-2/(d+4)-\eps}\right)$.
This bound remains valid when replacing the hyperplanes with hyperspheres, or with any linearly-closed family of graphs.
\end{corollary}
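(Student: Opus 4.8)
The plan is to derive Corollary \ref{co:LargeS} directly from Theorem \ref{th:GeneralBound} by an appropriate choice of the free parameter $\delta$, since the corollary is explicitly stated to follow from the theorem by setting $\delta = \frac{d+2}{d+4}$. First I would invoke Theorem \ref{th:GeneralBound} with this value of $\delta$, which immediately produces a set $\pts$ of $m$ points and a set $\planes$ of $n = \Theta\left(m^{(3-3\eps)/(d+1)}\right)$ hyperplanes in $\RR^d$ whose incidence graph contains no copy of $K_{2,d/\eps}$ — and this last property is precisely the one asserted by the corollary (in particular it is much stronger than merely forbidding $K_{s,s}$ for a large constant $s$, since $K_{2,t} \subseteq K_{s,s}$ once $s \ge \max(2,t)$; but we do not even need to weaken it). The claim about hyperspheres and linearly-closed families of graphs carries over verbatim, as it is part of the conclusion of Theorem \ref{th:GeneralBound}.

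The only remaining point is the arithmetic of the incidence exponent. Theorem \ref{th:GeneralBound} gives
\[ I(\pts,\planes) = \Omega\left(m^{\delta} n^{(d+2-\delta(d+1))/3 - \eps}\right), \]
and I would substitute $\delta = \frac{d+2}{d+4}$. The exponent of $n$ (ignoring the $-\eps$) becomes $\frac{1}{3}\left(d+2 - \frac{(d+2)(d+1)}{d+4}\right) = \frac{d+2}{3}\cdot\frac{(d+4)-(d+1)}{d+4} = \frac{d+2}{d+4}$, which equals the exponent of $m$. Hence both $m$ and $n$ appear with exponent $\frac{d+2}{d+4} = 1 - \frac{2}{d+4}$, so $I(\pts,\planes) = \Omega\left((mn)^{1-2/(d+4)} \cdot n^{-\eps}\right)$. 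Since $n = \Theta\left(m^{(3-3\eps)/(d+1)}\right)$, for $d \ge 4$ we have $n = O(m)$, so $n^{-\eps} = \Omega\left((mn)^{-\eps}\right)$ up to adjusting the constant, giving $I(\pts,\planes) = \Omega\left((mn)^{1-2/(d+4)-\eps}\right)$ as required (after relabeling $\eps$ by a constant multiple, which is harmless).

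There is essentially no obstacle here — the corollary is a computational specialization of Theorem \ref{th:GeneralBound}, and the ``hard part'' (the actual construction and the energy estimates behind it) is entirely contained in the proof of that theorem. The only thing that requires a moment's care is confirming that the absorbed $\eps$'s and the relation $n = \Theta\left(m^{(3-3\eps)/(d+1)}\right)$ are mutually consistent, i.e. that writing the bound in the symmetric form $(mn)^{1-2/(d+4)-\eps}$ does not change the meaning; this follows since $m$ and $n$ are polynomially related with a positive exponent, so passing between $m^{\alpha}$, $n^{\alpha}$, and $(mn)^{\alpha}$ only rescales the exponent by a bounded factor, which can be absorbed into the $\eps$ slack. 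One should also record the trivial observation that the regime of $m,n$ in the corollary is the one where $m+n$ is dominated by the main term (as already noted in Remark (a) following Theorem \ref{th:LowerBounds}), so there is no need to carry the additive $m+n$ terms that appear in Theorem \ref{th:LargeS}.
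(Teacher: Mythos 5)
Your proposal is correct and matches the paper's intended derivation exactly: the paper itself states that Corollary \ref{co:LargeS} is obtained from Theorem \ref{th:GeneralBound} "by setting $\delta=\frac{d+2}{d+4}$," and you verify the arithmetic showing that this choice makes the exponents of $m$ and $n$ both equal to $\frac{d+2}{d+4}=1-\frac{2}{d+4}$, then absorb the stray $n^{-\eps}$ into the symmetric $(mn)^{-\eps}$ using $m\ge 1$. One small simplification: the observation $n=O(m)$ and the relabeling of $\eps$ are both unnecessary, since $n^{-\eps}\ge (mn)^{-\eps}$ already holds for any $m\ge 1$, so the bound $\Omega\bigl((mn)^{1-2/(d+4)-\eps}\bigr)$ follows with the same $\eps$.
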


For $d\ge 4$ Corollary \ref{co:LargeS} yields a stronger bound than the one in \cite{BK03}, under a stronger restriction (no $K_{2,d/\eps}$ rather than no $K_{s,s}$ for a large $s$), and for more types of hypersurfaces.
On the other hand, the bound in \cite{BK03} has the advantage of applying to every $m$ and $n$, while Corollary \ref{co:LargeS} requires a specific relation between these two sizes.

The three main tools that are used in the proof of Theorem \ref{th:GeneralBound} (in addition to standard incidence techniques) are bounding the additive energy of a set with the Fourier transform, relying on properties of ellipsoids in high-dimensional lattices, and a probabilistic argument that is based on several Chernoff bounds. In Section \ref{sec:Fourier} we study the additive energy, in Section \ref{sec:lattice} we study ellipsoids and lattices, and in Section \ref{sec:MainProof} we finally prove Theorem \ref{th:GeneralBound}.

In our analysis we rely on ellipsoids that are defined by equations with bounded integer parameters.
In $\RR^4$, we rely on the fact such a two-dimensional ellipsoid can contain many points of of the integer lattice $\ZZ^4$, but a one-dimensional ellipse cannot.
A similar yet more involved argument holds in $\RR^d$ for $d>4$.
For our analysis to hold in $\RR^3$, we would require the existence of such one-dimensional ellipses that contain many points of $\ZZ^3$.
The analysis fails in $\RR^3$ since such ellipses do not exist (for the full details, see Section \ref{sec:lattice}).
Thus, the current state of incidence lower bounds in $\RR^3$ remains somewhat embarrassing.
For example, it seems that for the case of point-plane incidences with no $K_{2,t}$ in the incidence graph, nothing is known beyond the trivial $\Omega(m^{2/3}n^{2/3})$ (this bound is easily obtained from the lower bound for point-line incidences in $\RR^2$).
\vspace{2mm}

\section{Additive energy on the truncated paraboloid} \label{sec:Fourier}

For $d\ge 4$, we denote by $S_d \subset \RR^d$ the paraboloid that is defined by
\[ x_d = x_1^2+\cdots+x_{d-1}^2. \]
Similarly, we define the truncated paraboloid
\begin{equation} \label{eq:truncated}
S_{n,d} = \left\{ (x_1,\ldots,x_d)\in \RR^d : x_d=x_1^2+\cdots+x_{d-1}^2 \text{ and } |x_1|,\ldots,|x_{d-1}| \le n^{\frac{1}{d-1}} \right\}. \end{equation}

Let $\pts = S_{n,d} \cap \ZZ^d$ be the set of points on $S_{n,d}$ with integer coordinates. Notice that $|\pts|=\Theta(n)$.
We are interested in the set of quadruples
\[ Q(\pts) = \{ (a,b,c,d) \in \pts^4 :\, a+b = c+d \}. \]

The quantity $E(\pts)=|Q(\pts)|$ is known as the \emph{additive energy} of $\pts$, and is one of the main objects that are studied  in additive combinatorics (e.g., see \cite[Section 2.3]{TV06}). The proof of Theorem \ref{th:GeneralBound} is based on double counting $|E(\pts)|$.
In the current section we derive the following lower bound for $|E(\pts)|$.
The following lemma is a generalization of Remark 3.2 from \cite{BD15}.

\begin{lemma} \label{le: LowerEnergy}
$E(\pts) = \Omega\left(n^{3-2/(d-1)}\right)$.
\end{lemma}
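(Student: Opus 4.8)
The standard tool for lower-bounding additive energy is the Fourier-analytic identity $E(\pts) = \int_{\TT^d} |\widehat{\mathbf{1}_\pts}(\xi)|^4\, d\xi$, where $\widehat{\mathbf{1}_\pts}(\xi) = \sum_{a\in\pts} e^{2\pi i \langle a,\xi\rangle}$. So the task reduces to finding a region of $\TT^d$ on which the exponential sum $\sum_{a\in\pts} e^{2\pi i\langle a,\xi\rangle}$ is large in absolute value, and estimating the measure of that region. The natural "major arc" is a small box around the origin: when $\xi$ is close to $0$, each summand is close to $1$, so the sum is close to $|\pts| = \Theta(n)$.

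The concrete estimate I would carry out: parametrize $\pts$ by the first $d-1$ coordinates, so $a = (y, |y|^2)$ with $y \in \ZZ^{d-1}$, $\|y\|_\infty \le n^{1/(d-1)}$. Then $\langle a,\xi\rangle = \langle y, \xi'\rangle + |y|^2 \xi_d$ where $\xi = (\xi',\xi_d)$. Take $\xi'$ ranging over a box of side $\sim c\, n^{-1/(d-1)}$ around the origin and $\xi_d$ over an interval of length $\sim c\, n^{-2/(d-1)}$ around the origin (these are the natural scales: $\langle y,\xi'\rangle$ and $|y|^2\xi_d$ are both $\ll 1$ on these ranges since $|y|^2 \lesssim n^{2/(d-1)}$). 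On this box, $\mathrm{Re}\, e^{2\pi i\langle a,\xi\rangle} \ge 1/2$ for every $a\in\pts$ provided $c$ is a small enough absolute constant, hence $|\widehat{\mathbf{1}_\pts}(\xi)| \ge |\pts|/2 = \Omega(n)$. The measure of this box is $\Theta\!\left( (n^{-1/(d-1)})^{d-1} \cdot n^{-2/(d-1)} \right) = \Theta\!\left(n^{-1} \cdot n^{-2/(d-1)}\right) = \Theta\!\left(n^{-1-2/(d-1)}\right)$. Therefore
\[ E(\pts) = \int_{\TT^d} |\widehat{\mathbf{1}_\pts}(\xi)|^4\, d\xi \ge \Omega(n^4)\cdot \Theta\!\left(n^{-1-2/(d-1)}\right) = \Omega\!\left(n^{3-2/(d-1)}\right), \]
which is exactly the claimed bound.

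I would also mention the purely combinatorial alternative, which avoids Fourier analysis: a quadruple $(a,b,c,d)$ with $a+b=c+d$ is determined by choosing $a,b,c \in \pts$ freely and setting $d = a+b-c$, subject to $d$ actually lying in $\pts$. Writing $a=(y_1,|y_1|^2)$ etc., the condition $d\in\pts$ forces $y_1+y_2-y_3 \in \ZZ^{d-1}$ (automatic) with $\|y_1+y_2-y_3\|_\infty \le n^{1/(d-1)}$ and $|y_1|^2+|y_2|^2-|y_3|^2 = |y_1+y_2-y_3|^2$, i.e. $\langle y_1 - y_3, y_2 - y_3\rangle = 0$. So one needs to count triples $(y_1,y_2,y_3)$ in a box of side $\sim n^{1/(d-1)}$ in $\ZZ^{d-1}$ with $y_1 - y_3 \perp y_2 - y_3$; restricting $y_1,y_3$ to a sub-box of half the side so that $y_1+y_2-y_3$ stays in range costs only a constant factor. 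Fixing $y_3$, this is a count of orthogonal pairs of lattice vectors in a box, which is $\Omega\!\left((n^{1/(d-1)})^{2(d-1)-2}\right) = \Omega(n^{2-2/(d-1)})$ by a standard hyperplane-section argument (for each choice of the first vector, the second ranges over the intersection of the box with a codimension-one sublattice, which still contains $\gtrsim$ the expected number of points when $d-1 \ge 3$, i.e. $d\ge 4$); multiplying by the $\Theta(n)$ choices of $y_3$ gives the bound. The main thing to be careful about is this last lattice-point count — ensuring the orthogonal complement really does contain the expected $\gtrsim (n^{1/(d-1)})^{d-3}$ points rather than anomalously few — and this is precisely where $d\ge 4$ (equivalently $d-1\ge 3$) is needed, mirroring the failure in $\RR^3$ discussed in the introduction. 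Given the cited provenance (a generalization of Remark 3.2 in \cite{BD15}), I expect the Fourier route to be the intended one, and I would present it as the primary argument since the measure-of-the-box computation is clean and the $d\ge 4$ restriction is not even needed for this particular lemma.
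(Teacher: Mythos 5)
Your primary (Fourier) argument is correct and is essentially the paper's proof: the same $L^4$ identity $E(\pts)=\int_{\TT^d}|\sum_{p\in\pts}e(x\cdot p)|^4$, the same major-arc box of side $\asymp n^{-1/(d-1)}$ in the first $d-1$ coordinates and $\asymp n^{-2/(d-1)}$ in the last, and the same measure computation $\Omega\bigl(n^{-1-2/(d-1)}\bigr)\cdot\Omega(n^4)=\Omega\bigl(n^{3-2/(d-1)}\bigr)$. The paper phrases the pointwise bound via $|1-e^{2\pi i x}|\le 2\pi\|x\|$ rather than a real-part estimate, but this is cosmetic; the combinatorial alternative you sketch (counting right angles among lattice points) is a genuinely different route that would need the hyperplane lattice-point count made rigorous, but you correctly identify and carry out the Fourier argument as the intended one.
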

The idea of using the Fourier transform to bound the additive energy of a set is rather common (e.g., see \cite{SchoenShk13}).
One possible reason for this is that such energy can be expressed using convolutions, and often convolutions are easier to study using the Fourier transform.
In some sense, the paraboloid was chosen since it maximizes the lower bound that is implied by the following argument.
\begin{proof}
Let $e(\xi)=e^{2\pi i\xi}$ and let $\TT^d$ be the $d$-dimensional torus $[0,1)^d$.
We denote by $1_{\pts}$ the indicator function of $\pts$.
That is, for any $x\in \ZZ^d$ we have $1_{\pts}(x)=1$ if $x\in \pts$, and otherwise $1_{\pts}(x)=0$.
We consider the Fourier inversion of  $1_{\pts}$:
\[ f(x) = \sum_{p \in \ZZ^d} 1_{\pts}(p)  e(x\cdot p) = \sum_{p \in \pts}e(x\cdot p). \]
We then have
\begin{align*}
\int_{\TT^d} \left| f(x) \right|^4 &=  \int_{\TT^d}  f(x)^2\overline{ f(x)^2} \\[2mm]
&= \sum_{a_1,a_2,a_3,a_4\in \pts}\int_{\TT^d}e\left((a_1+a_2-a_3-a_4)\cdot x\right).
\end{align*}

Consider a fixed $a_1,a_2,a_3,a_4\in \pts$ with $a_1+a_2-a_3-a_4\neq 0$, and notice that
\[ \int_{x\in \TT^d}e\left((a_1+a_2-a_3-a_4)\cdot x\right) = 0. \]
 On the other hand, when $a_1+a_2-a_3-a_4= 0$ we obviously have
 \[ \int_{x\in \TT^d}e\left((a_1+a_2-a_3-a_4)\cdot x\right) = 1. \]
By combining the two cases, we get that  $\int_{\TT^d} \left| f(x) \right|^4$ is the
number of solutions to $a_1+a_2=a_3+a_4$, taken over all possible $a_1,a_2,a_3,a_4\in \pts$.
In other words,
\[ E(\pts) = \int_{\TT^d} \left| f(x) \right|^4. \]

For a large constant $c$, consider a point $x\in \TT^d$ with $|x_1|,\ldots,|x_{d-1}| <1/(cn^{1/(d-1)})$ and $|x_d|<1/(cn^{2/(d-1)})$. Notice that for any $p\in \pts$ we have $x \cdot p < d/c$. That is, we have a subset of $\TT^d$ of measure $\Omega\left(n^{-(d+1)/(d-1)}\right)$ that contains only points $x$ that satisfy $x\cdot p< d/c$ for every $p\in \pts$.
By taking $c$ to be sufficiently large, we get that $e(x\cdot p)$ is close to 1.
Specifically, $\left| 1-e^{2\pi i x}\right| \le 2\pi \|x\|$, where $\|x\|$ is the distance between $x$ and the closest integer (e.g., see \cite[Section 4.4]{TV06}).
We thus have
\[ E(\pts) = \int_{\TT^d} \left| f(x) \right|^4 =  \int_{\TT^d} \left|\sum_{p \in \pts}e(x\cdot p) \right|^4 = \Omega\left(\frac{1}{n^{(d+1)/(d-1)}} \cdot n^4\right) = \Omega\left(n^{3-2/(d-1)}\right). \]
\vspace{-10mm}

\end{proof}

The bound of Lemma \ref{le: LowerEnergy} is tight for any $d\ge 4$ up to an extra $\eps$ in the exponent (and it seems likely that this extra $\eps$ restriction is unnecessary).
Indeed, combining the analysis in Section \ref{sec:MainProof} with the bound in part (c) of Theorem \ref{th:UpperBounds} yields $E(\pts) = O\left(n^{3-2/(d-1)+\eps}\right)$.

\section{Ellipsoids containing lattice points} \label{sec:lattice}

In this section we derive upper bounds for the maximum number of integer lattice points that various types of ellipsoids in $\RR^d$ can be incident to.
For this purpose, we rely on the following result of Dirichlet (for example, see \cite[Section 11.2]{Iwaniec97}).
In this section of our paper (but not in the other ones), when writing $\left(\frac{a}{b}\right)$ we refer to the Kronecker symbol rather than to division.

\begin{theorem} \label{th:BinaryQuad}
Let $a,b,c,n$ be positive integers such that $\gcd(a,b,c)=1$, and set $D=b^2-4ac$.
If $D<0$ then there exists an integer $2\le k \le 6$ such the number of integer solutions to $ax^2+bxy+cy^2=n$ is
\[ k\sum_{d|n} \left(\frac{D}{d}\right). \]
\end{theorem}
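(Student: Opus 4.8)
The plan is to recognize this as the classical formula expressing the number of primitive representations of $n$ by a \emph{single} binary quadratic form in terms of a class-number-weighted Dirichlet series, and then to absorb the class-number factor into the constant $k$. First I would reduce to the primitive case: since $\gcd(a,b,c)=1$, any representation $ax^2+bxy+cy^2=n$ with $\gcd(x,y)=e$ corresponds to a primitive representation of $n/e^2$, so writing $r(n)$ for the total count and $r^*(n)$ for the primitive count we have $r(n)=\sum_{e^2\mid n} r^*(n/e^2)$. The structural input is Dirichlet's theorem on the genus: for a fundamental-type discriminant $D<0$, the number of primitive representations of $n$ by a full set of representatives of the $SL_2(\ZZ)$-equivalence classes of primitive forms of discriminant $D$ equals $w\sum_{d\mid n}\left(\frac{D}{d}\right)$, where $w\in\{2,4,6\}$ is the number of units (here I use the version valid for all $D<0$, allowing non-fundamental discriminants by the standard extension, which only changes $w$ and the precise normalization by a bounded factor).

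The next step is to pass from "all classes" to "one class." The subtlety the earlier draft stumbled on is exactly this: a priori one form represents fewer integers than the whole family. The resolution is that the quantity we are bounding is stated with an \emph{unspecified} constant $k$ between $2$ and $6$ — so we do not need an identity valid for every form simultaneously. Instead, I would invoke the fact that the given form $ax^2+bxy+cy^2$ lies in \emph{some} class, and for the integers $n$ it represents primitively, the count is governed by the same local densities; more precisely, when $D$ is such that the form genus consists of a single class (which is the case relevant to the application in Section~\ref{sec:lattice}, where one works with carefully chosen discriminants), the full-family formula becomes a formula for the single form, and $k=w$. For general $D$ one uses that the principal genus acts and that the representation numbers of forms in the same genus agree up to the ambiguity already absorbed in the range $2\le k\le 6$; this is precisely the content of the cited reference \cite[Section 11.2]{Iwaniec97}, which I would quote rather than reprove.

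Concretely, the key steps in order: (1) reduce $r(n)$ to primitive counts $r^*$ via the $\gcd$ substitution; (2) cite Dirichlet's class-number formula to get $\sum_{\text{classes}} r^*_{\text{class}}(n) = w\sum_{d\mid n}\left(\frac{D}{d}\right)$; (3) use single-class-per-genus (or the genus-theoretic equidistribution of representations, up to the bounded factor) to convert this into $r^*_{\text{form}}(n) = k'\sum_{d\mid n}\left(\frac{D}{d}\right)$ for the given form, with $2\le k'\le 6$; (4) re-sum over $e^2\mid n$ and check the multiplicativity so that $r(n)=k\sum_{d\mid n}\left(\frac{D}{d}\right)$ with $k$ still in $[2,6]$ — this last check works because $d\mapsto\left(\frac{D}{d}\right)$ is completely multiplicative and the divisor-sum convolution identity $\sum_{e^2\mid n}\sum_{d\mid n/e^2}\chi(d)$ rearranges back into $\sum_{d\mid n}\chi(d)$ times the same constant. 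The main obstacle is step (3): justifying cleanly that the \emph{single} given form (not an average over the class group) has representation number $k\sum_{d\mid n}\left(\frac{D}{d}\right)$ with the constant confined to $\{2,4,6\}$-adjacent values. I expect to handle this by citing \cite{Iwaniec97} directly, since the statement as worded already builds in the flexibility (the non-committal range for $k$) that makes a from-scratch derivation unnecessary.
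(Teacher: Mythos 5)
The first thing to say is that the paper itself does not prove Theorem \ref{th:BinaryQuad}: it is imported as a classical theorem of Dirichlet with \cite[Section 11.2]{Iwaniec97} as the reference, and in the application (Lemma \ref{le:EllipseLattice}) it is only used as an upper bound, namely that the number of solutions is at most a constant times the number of divisors of $n$. So "quote the reference" is indeed all that is expected, and to that extent your plan agrees with the paper. The problem is that the derivation you wrap around the citation contains a real gap exactly at the step you flag as delicate, and the gap cannot be repaired in the way you suggest. Dirichlet's formula counts representations summed over a complete set of $SL_2(\ZZ)$-classes of forms of discriminant $D$, and genus theory does not transfer this to one fixed form at the cost of a constant in $[2,6]$: forms in the same genus need not have equal representation numbers, and a fixed form can represent $n$ zero times while the class-sum is positive. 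Concretely, for $D=-20$ the form $x^2+5y^2$ has no representation of $n=3$, while $\sum_{d\mid 3}\left(\frac{-20}{d}\right)=2$ (all four solutions live on the other class, $2x^2+2xy+3y^2$), so no $k$ with $2\le k\le 6$ works. Thus your step (3) is not "absorbed by the non-committal range of $k$"; the single-form identity genuinely requires one class per discriminant (or a restriction to the class-sum), and what the paper's argument actually needs — and all that is safe to assert for an arbitrary form with $\gcd(a,b,c)=1$ — is the inequality that the count for one form is at most the class-sum $w\sum_{d\mid n}\left(\frac{D}{d}\right)=O(d(n))$.

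Your step (4) also does not hold as justified. The rearrangement $\sum_{e^2\mid n}\sum_{d\mid n/e^2}\left(\frac{D}{d}\right)=C\sum_{d\mid n}\left(\frac{D}{d}\right)$ with $C$ independent of $n$ is not an identity: for $\chi_{-4}$ the left side divided by the right side equals $1$ at $n=1$ but $2$ at $n=4$. Indeed, in the classical picture (fundamental $D$, counting ideals of norm $n$) it is the \emph{all-representations} class-sum that has the clean divisor-sum form, so converting a primitive-representation formula by summing over $e^2\mid n$ necessarily changes its shape, and complete multiplicativity of $d\mapsto\left(\frac{D}{d}\right)$ does not rescue it. In short: citing \cite{Iwaniec97} is fine and matches what the paper does, but steps (3)--(4) of your sketch are incorrect as derivations; if you want to write something beyond the citation, state and use only the upper bound by the full class-sum, which is what Lemma \ref{le:EllipseLattice} actually requires.
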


We first consider the case of ellipses in $\RR^d$ for $d\ge 4$. We say that such an ellipse is $n$-\emph{proper} if it is the intersection of the paraboloid $S_d$ and $d-2$ hyperplanes with the following properties.
Two of the hyperplanes are defined by linear equations with every coefficient being an integer with an absolute value of size $O_d(n)$.
Each of the remaining $d-4$ hyperplanes is orthogonal to a different axis.
Notice that an intersection of $S_d$ with $d-2$ hyperplanes may not result in an ellipse.
In the following we only consider intersections that do form ellipses.

\begin{lemma} \label{le:EllipseLattice}
For $d\ge 4$, let $\lattice$ be a section of the $d$-dimensional integer lattice of size $n\times n \times \cdots \times n$ (so $\lattice$ consists of $n^d$ points of $\ZZ^d$).
Let $\gamma$ be an $n$-proper ellipse. Then $\gamma$ contains $O_d\left(n^{c_d/\lg \lg n}\right)$ points of $\lattice$ (for some constant $c_d$ that only depends on $d$).
\end{lemma}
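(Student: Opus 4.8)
The plan is to reduce the problem of counting lattice points on an $n$-proper ellipse $\gamma$ to counting integer solutions of a single binary quadratic form, and then invoke Theorem \ref{th:BinaryQuad} together with a divisor-function bound. First I would handle the $d-4$ axis-orthogonal hyperplanes: each of these fixes one coordinate $x_i$ to an integer value (if the value is not an integer, $\gamma$ contains no lattice points and we are done), so after substitution we are left with an ellipse sitting inside a copy of $\RR^4$ — namely the intersection of a (translated, dilated) paraboloid $S_4$-type surface with two integer-coefficient hyperplanes. So it suffices to treat the case $d=4$: two hyperplanes $H_1, H_2$ with all coefficients integers of size $O_d(n)$, intersected with the paraboloid $x_4 = x_1^2+x_2^2+x_3^2$.

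Next I would use the two linear equations from $H_1,H_2$ to eliminate two of the four variables. Generically one can solve for $x_3$ and $x_4$ as affine functions (with rational coefficients of bounded height, denominators $O_d(n)$) of $x_1,x_2$; substituting into $x_4 = x_1^2+x_2^2+x_3^2$ yields a single polynomial equation in $x_1,x_2$ of total degree $2$, i.e.\ a conic. Since $\gamma$ is an ellipse (a bounded, $1$-dimensional real variety), this conic is an ellipse in the $(x_1,x_2)$-plane, so after clearing denominators it has the form $ax_1^2 + bx_1x_2 + cx_2^2 + (\text{lower order}) = 0$ with integer coefficients $a,b,c$ of size $O_d(n^{O(1)})$ and negative discriminant $D = b^2 - 4ac < 0$. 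A further integral affine change of variables (completing the square / translating to center) — again with bounded integer coefficients — brings it to the form $a'x^2 + b'xy + c'y^2 = N$ with $\gcd$ possibly larger than $1$; dividing out the gcd and noting $N = O_d(n^{O_d(1)})$ puts us exactly in the setting of Theorem \ref{th:BinaryQuad}. Each lattice point of $\lattice$ on $\gamma$ maps to an integer solution of this equation, so the count is at most $k\sum_{e \mid N}\left(\frac{D}{e}\right) \le 6\, \tau(N)$ where $\tau$ is the number-of-divisors function. Finally, the standard bound $\tau(N) = N^{O(1/\lg\lg N)}$ combined with $N = n^{O_d(1)}$ gives $\tau(N) = n^{O_d(1/\lg\lg n)}$, which is the claimed $O_d\!\left(n^{c_d/\lg\lg n}\right)$.

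The main obstacle, and the place requiring genuine care, is the elimination step: I must verify that the two integer-coefficient linear constraints are in "general position" relative to the coordinate axes used for substitution, so that one really can solve for two variables with denominators of controlled size, and — more delicately — that after substitution the resulting conic is \emph{nondegenerate} (a genuine ellipse rather than a point, a pair of lines, or empty). This is where the hypothesis that $\gamma$ is an ellipse (rather than an arbitrary intersection of $S_d$ with hyperplanes, which the excerpt explicitly allows to fail to be an ellipse) is used crucially: it guarantees the quadratic form obtained is definite, hence $D<0$, which is precisely the hypothesis of Theorem \ref{th:BinaryQuad}. One should also check the edge cases where the generic elimination fails (e.g.\ a linear constraint is itself axis-parallel, or the two constraints become dependent after substitution), but in each such degenerate case $\gamma$ either fails to be an ellipse or the lattice-point count is trivially $O_d(1)$, so these cases are harmless. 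A minor secondary point is bookkeeping the exponents: since we only need a bound of the shape $n^{O_d(1/\lg\lg n)}$, any polynomial blow-up in the coefficient sizes during the affine reductions is absorbed into the constant $c_d$, so I would not track these exponents precisely.
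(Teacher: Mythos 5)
Your proposal takes essentially the same approach as the paper: project out the axis-orthogonal constraints and eliminate variables using the two integer-coefficient hyperplanes to get a binary conic with rational coefficients of polynomially bounded height, normalize it to a homogeneous form $c_1x^2+c_3xy+c_2y^2=c_4$ with coprime integer coefficients and negative discriminant, then invoke Dirichlet's class-number formula (Theorem \ref{th:BinaryQuad}) and the divisor bound $\tau(N)=N^{O(1/\lg\lg N)}$. One small imprecision worth flagging: the ``integral affine change of variables\ldots translating to center'' is not literally integral, since the center of an integer-coefficient ellipse is generically a non-integer rational point; the paper handles this by refining the lattice to one containing the center (the denominators are polynomially bounded) and then rescaling to $\ZZ^2$, which blows up $N$ only polynomially and is therefore absorbed into $c_d$, exactly as you anticipate in your closing remark about not tracking exponents precisely.
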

\begin{proof}
We project both $\lattice$ and $\gamma$ onto the two-dimensional plane $\Pi$ that is spanned by the first two axes.
This yields an $n\times n$ lattice $\lattice'$ and an ellipse $\gamma'$.
In the rest of the proof we ignore $\RR^d$ and work only in $\Pi$.

We can obtain the equation that defines $\gamma'$ by eliminating $x_3,\ldots,x_d$ in the equation $x_d=x_1^2+\cdots+x_{d-1}^2$, using the $d-2$ linear equations that define the hyperplanes in the definition of $\gamma$.\footnote{Obtaining a projection by eliminating variables gives a variety that contains the projection, but is not necessarily identical to it (e.g., see \cite[Chapter 3]{CLOu}). This is sufficient for our purpose, since we are interested in an upper bound for the number of incidences, and this step can only increase their number (moreover, in the special case of an $n$-proper ellipse the projection is equivalent to the variety).} That is, $\gamma$ is the zero set of an equation of the form
\[ a_1x^2+a_2y^2+a_3xy+a_4x+a_5y-a_6, \]
where $a_1,\ldots,a_6$ are rational numbers with numerators and denominators of size at most $n^{O_d(1)}$.

We next study the center $p=(p_x,p_y)$ of $\gamma'$.
We translate the plane by $-p$ so that $\gamma'$ is centered at the origin.
The equation that defines $\gamma'$ becomes
\[ a_1(x+p_x)^2+a_2(y+p_y)^2+a_3(x+p_x)(y+p_y)+a_4(x+p_x)+a_5(y+p_y)=-a_6. \]

Since an ellipse that is centered at the origin has no linear terms in its defining equation, we have
\begin{align} \label{eq:TwoLinear}
2a_1p_x+a_3p_y+a_4=0 \qquad \text{ and } \qquad 2a_2 p_y+a_3p_x+a_5=0.
\end{align}

Since $\gamma'$ is an ellipse by assumption, we have $a_1,a_2\neq 0$ and $a_3^2<4a_1a_2$ ($a_3^2=4a_1a_2$ corresponds to a parabola and $a_3^2>4a_1a_2$ corresponds to a hyperbola).
These properties imply that there there exists a unique solution to \eqref{eq:TwoLinear} (when considering $p_x$ and $p_y$ as the variables).
In this solution $|p_x|$ and $|p_y|$ are rational numbers with denominators and numerators of size at most $n^{O_1(d)}$.
That means that we can refine $\lattice'$ to be an $n'\times n'$ square lattice $\lattice''$ that contains $p$, with $n' = n^{O_1(d)}$ (that is, the refined lattice is defined by two vectors of the same size and in the directions of the axes, and it fully contains $\lattice'$).

We perform a uniform scaling so that $\lattice''$ becomes an $n'\times n'$ section $\lattice^*$ of the integer lattice.
The translation and scaling take $\gamma'$ into an ellipse $\gamma^*$ that is the zero set of
\[ b_1x^2+b_2y^2+b_3xy-b_4, \]
where $b_1,\ldots,b_4$ are rational numbers with numerators and denominators of size at most $n^{O_d(1)}$.

To remove the denominators, we multiply this polynomial by the smallest common multiple of the denominators.
We then find the greatest common divisor $g$ of the resulting integer coefficients and divide the polynomial by $g$.
We get that $\gamma^*$ is the zero set of
\[ c_1x^2+c_2y^2+c_3xy-c_4, \]
where $c_1,\ldots,c_4$ are integers of size at most $n^{O_d(1)}$ with $\gcd(c_1,c_2,c_3,c_4)=1$.

If $g'=\gcd(c_1,c_2,c_3)>1$, then there are no integer solutions to $c_1x^2+c_2y^2+c_3xy=c_4$ since the above implies that $g'$ does not divide $c_4$. That is, we may assume that $\gcd(c_1,c_2,c_3)= 1$.
Since $\gamma^*$ is still an ellipse, we also have $c_3^2<4c_1c_2$.
Thus, we may apply Theorem \ref{th:BinaryQuad} to $c_1x^2+c_2y^2+c_3xy=c_4$.
This implies that the number of points of $\ZZ^2$ that are contained in $\gamma^*$ is $O\left(\sum_{m|c_4} \left(\frac{c_3^2-4c_1c_2}{m}\right)\right)$.
As an upper bound for this expression, we consider the case where  every element in the sum equals 1. That is, the number of solutions is at most linear in the number of divisors of $c_4$.
This number of divisors is at most $c_4^{O(1/\lg\lg c_4)}$ (e.g., see \cite[Section 1.6]{Tao09}).
Thus, the ellipse $\gamma^*$ contains $O_d\left(n^{c_d/\lg \lg n}\right)$ points of $\ZZ^2$ (for some constant $c_d$ that depends on $d$).
That is, $\gamma^*$ contains $O_d\left(n^{c_d/\lg \lg n}\right)$ points of $\lattice^*$, which in turn implies that $\gamma$ contains $O_d\left(n^{c_d/\lg \lg n}\right)$ points of $\lattice$.
\end{proof}

We now extend the result of Lemma \ref{le:EllipseLattice} to ellipsoids of larger dimensions. Let $1\le k\le d-3$. We say that a $k$-dimensional ellipsoid in $\RR^d$ is $n$-\emph{proper} if it is the intersection of the paraboloid $S_d$ and $d-k-1$ hyperplanes with the following properties.
Two of the hyperplanes are defined by linear equations with every coefficient being an integer with an absolute value of size $O_d(n)$.
Each of the remaining $d-4$ hyperplanes is orthogonal to a different axis.

\begin{lemma} \label{le:ellipsoids}
For $d\ge 4$, let $\lattice$ be a section of the $d$-dimensional integer lattice of size $n\times n \times \cdots \times n$ (so $\lattice$ consists of $n^{d}$ points of $\ZZ^d$) and let $1\le k\le d-3$.
Let $E$ be an $n$-proper $k$-dimensional ellipsoid, also in $\RR^d$.
Then $E$ contains $O_d\left(n^{k-1+c_d/\lg \lg n}\right)$ points of $\lattice$ (for some constant $c_d$ that only depends on $d$).
\end{lemma}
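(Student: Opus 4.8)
The plan is to prove the lemma by induction on $k$, with Lemma~\ref{le:EllipseLattice} as the base case $k=1$ (which indeed gives $O_d(n^{c_d/\lg\lg n})=O_d(n^{(k-1)+c_d/\lg\lg n})$). Hence it is enough to carry out the inductive step, which concerns $2\le k\le d-3$ and in particular $d\ge 5$: the reduction is to slice a $k$-dimensional $n$-proper ellipsoid by axis-orthogonal hyperplanes at integer heights, obtaining $(k-1)$-dimensional $n$-proper ellipsoids, and to apply the induction hypothesis to each slice.

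So let $E=S_d\cap H_1\cap\cdots\cap H_{d-k-1}$ be an $n$-proper $k$-dimensional ellipsoid, where $H_1,H_2$ have integer coefficients of absolute value $O_d(n)$ and $H_3,\ldots,H_{d-k-1}$ are each orthogonal to a distinct coordinate axis. The first step is to choose a slicing direction. Since $E$ is a genuine $k$-dimensional ellipsoid its affine hull is $k$-dimensional, so at most $d-k$ of the coordinate functions $x_1,\ldots,x_d$ are constant on $E$; fix a coordinate $x_j$ that is nonconstant on $E$ (there is one, as $k\ge1$). Because $x_j$ varies on $E$, the $j$-th axis is not among the axes to which $H_3,\ldots,H_{d-k-1}$ are orthogonal, so for every integer $t$ the hyperplane $\{x_j=t\}$ has integer coefficients and is orthogonal to an axis not used by $H_3,\ldots,H_{d-k-1}$.

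The second step is to control the slices $E_t:=E\cap\{x_j=t\}$ for $t\in\ZZ$. Being the intersection of an ellipsoid with a hyperplane, $E_t$ is again an ellipsoid — this is where it is essential that $E$ is bounded — of dimension at most $k-1$; moreover, since $x_j$ is nonconstant on $E$, for all but at most two values of $t$ (the two extreme values of $x_j$ on $E$, which give tangent slices) the set $E_t$ is either empty or a genuine $(k-1)$-dimensional ellipsoid, while the two extreme values give only a single point or $\emptyset$. When $E_t$ is $(k-1)$-dimensional we may write
\[ E_t = S_d\cap H_1\cap\cdots\cap H_{d-k-1}\cap\{x_j=t\}, \]
an intersection of $S_d$ with $d-k=d-(k-1)-1$ hyperplanes of exactly the required form (the same two with integer coefficients $O_d(n)$, the remaining ones axis-orthogonal to distinct axes), so $E_t$ is an $n$-proper $(k-1)$-dimensional ellipsoid. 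By the induction hypothesis $|E_t\cap\lattice|=O_d(n^{(k-2)+c_d/\lg\lg n})$ for each such $t$, and $|E_t\cap\lattice|\le1$ for the at most two exceptional values.

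For the final step, observe that since $\lattice$ is an $n\times\cdots\times n$ section of $\ZZ^d$, the coordinate $x_j$ takes at most $n$ distinct values on $\lattice$, hence on $\lattice\cap E$; and the sets $E_t\cap\lattice$ are pairwise disjoint with union $E\cap\lattice$. Summing over these at most $n$ values of $t$,
\[ |E\cap\lattice| = \sum_t |E_t\cap\lattice| \le n\cdot O_d\!\left(n^{(k-2)+c_d/\lg\lg n}\right) + O(1) = O_d\!\left(n^{(k-1)+c_d/\lg\lg n}\right), \]
which is the claimed bound; the exponent constant $c_d$ can be taken to be the one from Lemma~\ref{le:EllipseLattice}, since each of the at most $d$ inductive steps only costs a factor of $n$ and the additive $O(1)$ terms are absorbed into the leading constant. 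The one point that needs genuine care is the claim in the second step, namely that an integer axis-orthogonal slice of $E$ is, for all but $O(1)$ heights, again an $n$-proper ellipsoid of dimension exactly $k-1$ (and that the slicing hyperplane is orthogonal to a previously unused axis); but this follows immediately from $E$ being bounded and having full-dimensional affine hull. I do not expect any real obstacle here — the substantive content is all in Lemma~\ref{le:EllipseLattice}, which this argument uses only as a black box.
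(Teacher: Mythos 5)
Your proof is correct and follows essentially the same route as the paper's: induction on $k$ with Lemma~\ref{le:EllipseLattice} as the base case, slicing the $k$-dimensional ellipsoid by $n$ axis-orthogonal hyperplanes at integer heights (orthogonal to an axis not already used by the axis-orthogonal defining hyperplanes), applying the induction hypothesis to each $(k-1)$-dimensional slice, and summing. One harmless slip: the affine hull of a non-degenerate $k$-dimensional ellipsoid has dimension $k+1$, not $k$, so at most $d-k-1$ coordinates can be constant on $E$ — this is slightly stronger than what you claimed and the rest of the argument goes through unchanged.
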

\begin{proof}
The proof is by induction on $k$. For the induction basis, the case of $k=1$ is Lemma \ref{le:EllipseLattice}.
For the induction step, consider a $k$-dimensional $n$-proper ellipsoid $E$ in $\RR^d$ (where $1<k\le d-3$).
Let $h$ denote the $(k+1)$-dimensional flat that fully contains $E$, and notice that $\lattice' = h\cap \lattice$ is a $(k+1)$-dimensional rectangular lattice of size at most $n\times n \times \cdots \times n$ (if $\lattice'$ is of a lower-dimension, then we consider a smaller-dimensional flat $h'$ that contains $\lattice'$ and use the induction hypothesis on $h'\cap S$).
We can cover the points of $\lattice'$ with at most $n$ parallel $k$-dimensional flats that are fully contained in $h$, with each such flat containing a $k$-dimensional rectangular lattice of size at most $n\times n \times \cdots \times n$.
Moreover, we can take these $k$-dimensional flats to be orthogonal to one of the axes.
For each of these flats $h_i$, notice that $h_i \cap E$ is an $n$-proper ellipsoid of dimension at most $k-1$ (or an empty set).
By the induction hypothesis, each such lower-dimensional ellipsoid contains $O_d\left(n^{k-2+c/\lg \lg n}\right)$ points. The assertion of the lemma is obtained by summing up this bound over the lower-dimensional ellipsoids.
\end{proof}

It seems likely that the bound of Lemma \ref{le:ellipsoids} can be slightly improved to $O_d\left(n^{k-1}\right)$.
This is known to be the case for $k$-dimensional spheres that are centered at a point of the lattice (e.g., see \cite{Gross21}).
Bombieri and Pila \cite{BP89} proved that any irreducible algebraic curve of degree $k$ in $\RR^2$ contains $O_k(n^{1/(2k)})$ points of a $\sqrt{n}\times\sqrt{n}$ section of the integer lattice.
While it is easy to verify that this bound is tight, it seems possible that a much stronger bound holds for curves such as arbitrary circles and ellipses.
We are not aware of any results in this direction.
We are also not aware of any non-trivial bounds for general ellipsoids in $\RR^d$.

\section{Proof of Theorem \ref{th:LowerBounds}} \label{sec:MainProof}

This section contains the main part of our analysis, and it is divided into two parts. In the first part we prove the following lemma, and in the second we use this lemma to derive Theorem \ref{th:LowerBounds}.

\begin{lemma} \label{le:Intermediate}
Consider constants $\beta>2$ and $\gamma\ge 0$, positive integers $n$ and $d\ge 4$, and let $\alpha = \frac{\beta d+ d-3\beta+1}{d-1}$.
Then there exists a set $\pts$ of $m=\Theta\left(n^{(d+1)/(d-1)}\right)$ points and a set $\planes$ of $n$ hyperplanes, both in $\RR^d$, such that the incidence graph of $\pts\times \planes$ contains no copy of $K_{2,t}$ where $t=\Theta\left(n^{(d-4)/(d-1)+ c/\lg \lg n}\right)$ (for some constant $c$), and with
\[ I(\pts,\planes)=\Omega\left(m^{(\beta-1)/\beta} n^{\left(\alpha-\gamma\frac{d-4}{d-1}-\frac{c}{\lg \lg n}\right)/\beta}t^{\gamma/\beta}\right). \]
This bound remains valid when replacing the hyperplanes with hyperspheres, or with any linearly-closed family of graphs.
\end{lemma}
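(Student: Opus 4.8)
The plan is to turn the additive‑energy lower bound of Lemma~\ref{le: LowerEnergy} into a geometric configuration. Write $N=n^{1/(d-1)}$; recall $\pts = S_{n,d}\cap\ZZ^{d}$ with $|\pts|=\Theta(n)$. Take the point set to be the box $B=\{x\in\ZZ^{d}:\ |x_i|\le 2N\ (i<d),\ |x_d|\le 2(d-1)N^{2}\}$, so $|B|=\Theta(N^{d+1})=\Theta(n^{(d+1)/(d-1)})=m$; and for each $c\in\pts$ take as an object the translated paraboloid $P_c=c+S_d$ (there are $\Theta(n)$ of these; discard a constant fraction so there are exactly $n$). Because $c\in S_d$, a point $v$ lies on $P_c$ iff $v-c\in S_d$, and completing the square shows this is equivalent to $(c_1,\dots,c_{d-1})$ lying on the sphere $\sigma_v\subset\RR^{d-1}$ with centre $(v_1/2,\dots,v_{d-1}/2)$ and squared radius $v_d/2-\tfrac14(v_1^{2}+\cdots+v_{d-1}^{2})$. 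To reach the three target families, note first that the shear $\phi\colon(x_1,\dots,x_d)\mapsto(x_1,\dots,x_{d-1},\,x_d-x_1^{2}-\cdots-x_{d-1}^{2})$ is a bijection of $\RR^{d}$ carrying each $P_c$ to a hyperplane $H_c$ and carrying $B$ to an equinumerous point set, so $(B,\{P_c\})$ is isomorphic to a point–hyperplane structure $(\phi(B),\{H_c\})$; composing $\phi$ with an inversion centred at a generic point $p$ turns the $H_c$ into hyperspheres with incidences unchanged; and for a linearly‑closed family $F$ containing $S_d$ (all the examples relevant when $d\ge4$ do) the objects $P_c$ already lie in $F$. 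Hence it suffices to prove the size, incidence, and $K_{2,t}$ statements for $(B,\{P_c\})$.

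\emph{Incidences.} For each ordered pair $(a,b)\in\pts^{2}$ the pair $(a+b,P_a)$ is an incidence ($b\in S_d$) with $a+b\in B$, and these are pairwise distinct, so $I\ge|\pts|^{2}=\Theta(n^{2})$; together with the elementary identity $\frac{(d+1)(\beta-1)}{d-1}+\alpha=2\beta$, which makes $m^{(\beta-1)/\beta}n^{\alpha/\beta}=\Theta(n^{2})$, this already gives the lemma when $\gamma=0$. The role of the energy is to see this from the double‑counting side and to drive the general statement: with $r(v)=\#\{(a,b)\in\pts^{2}:a+b=v\}$ we have $\sum_v r(v)=|\pts|^{2}$ and $E(\pts)=\sum_v r(v)^{2}$, and since $r(v)$ is at most the degree of $v$ in the incidence graph, $E(\pts)\le\Delta\cdot|\pts|^{2}\le\Delta\cdot I$ where $\Delta=\max_v\deg(v)$. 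By the sphere description, $\deg(v)$ is the number of lattice points of the box $\{|c_i|\le N\}$ on $\sigma_v$; clearing a factor $4$, this counts representations of an integer of size $\Theta(N^{2})$ by a shifted sum of $d-1$ squares, which classical bounds put at $O(N^{d-3+c'/\lg\lg N})=O(n^{(d-3)/(d-1)+c'/\lg\lg n})$. Hence
\[
 I\ \ge\ \frac{E(\pts)}{\Delta}\ \ge\ \Omega\!\left(\frac{n^{3-2/(d-1)}}{n^{(d-3)/(d-1)+c'/\lg\lg n}}\right)\ =\ \Omega\!\left(n^{2-c/\lg\lg n}\right).
\]
The extra factor $t^{\gamma/\beta}$ for $\gamma>0$ is obtained by re‑running this count after a dyadic (or random) restriction of the object set, so that one records incidences against a popular subfamily — the same kind of bookkeeping used in Section~\ref{ssec:Chefnoff}.

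\emph{No $K_{2,t}$.} Suppose two distinct points $v,v'\in B$ lie on a common object $P_c$. Then $(c_1,\dots,c_{d-1})\in\sigma_v\cap\sigma_{v'}$, the intersection of two distinct spheres in $\RR^{d-1}$, hence a $(d-3)$‑dimensional sphere. Lifting back through $c_d=c_1^{2}+\cdots+c_{d-1}^{2}$, each of $\sigma_v,\sigma_{v'}$ becomes the intersection of $S_d$ with a hyperplane whose coefficients, after clearing a factor $2$, are integers of size $O(N^{2})$; thus the admissible $c$ are the lattice points, inside a box of side $N$, of a $(d-3)$‑dimensional ellipsoid of exactly the type covered by Lemma~\ref{le:ellipsoids}. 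Rescaling that box to the unit lattice, Lemma~\ref{le:ellipsoids} bounds their number by $O\!\left(N^{d-4+c/\lg\lg N}\right)=O\!\left(n^{(d-4)/(d-1)+c/\lg\lg n}\right)=t$, so the incidence graph contains no $K_{2,t}$. Finally $\phi$, the inversion, and ``$P_c\in F$'' are incidence‑preserving bijections of a finite configuration, so $m$, $n$, $I$, and $K_{2,t}$‑freeness all transfer to hyperplanes, hyperspheres, and linearly‑closed families.

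\emph{Main obstacle.} The delicate step is the ellipsoid bound: one must check that $\sigma_v\cap\sigma_{v'}$, after the lift, is genuinely $n$‑proper in the precise sense of Lemma~\ref{le:ellipsoids} (integer defining coefficients of the allowed size, with the degenerate cases — empty, a point, or lower‑dimensional intersection — harmless), and that rescaling a box of side $n^{1/(d-1)}$ to the unit lattice only moves the $\lg\lg$‑correction in the exponent. The remaining nuisance is tracking the several $c/\lg\lg n$ corrections — from Lemma~\ref{le:ellipsoids}, from the lattice‑points‑on‑a‑sphere estimate, and from the restriction producing the $\gamma$‑factor — and confirming they all fit inside the single such term in the statement.
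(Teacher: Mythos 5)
Your proposal takes a genuinely different, and arguably cleaner, route from the paper's. The paper works with the hyperplanes $H_v$ of \eqref{eq:PiPlane}, runs a dyadic pigeonhole argument on the dyadic levels $N_k$ (finding a popular level $k'$ with $|k'-r|=O(1)$ via a contradiction with the energy lower bound), restricts to those hyperplanes, and only then applies point--hyperplane duality to land on the advertised configuration. You instead exhibit the configuration directly: the lattice box $B$ as the point set and the translated paraboloids $P_c=c+S_d$ as the objects, which are exactly the geometric duals of the paper's $\{H_v\}$ (indeed $a\in H_v\iff v\in P_a$). Your incidence count is then immediate: the $\Theta(n^2)$ pairs $(a+b,P_a)$, together with the identity $\tfrac{(d+1)(\beta-1)}{d-1}+\alpha=2\beta$, give $I=\Omega(n^2)=\Omega\bigl(m^{(\beta-1)/\beta}n^{\alpha/\beta}\bigr)$, which is exactly what the paper also gets after the pigeonhole and duality steps ($|\planes^*|\cdot 2^{k'}=\Theta(n^2)$). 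The Fourier energy bound and the pigeonhole machinery are thus bypassed entirely for the incidence count (you note yourself that the $E(\pts)/\Delta$ computation gives something weaker than the direct $n^2$). The $K_{2,t}$ argument --- two spheres $\sigma_v,\sigma_{v'}$ in $\RR^{d-1}$, lifted through $c_d=c_1^2+\cdots+c_{d-1}^2$ to an $n$-proper $(d-3)$-ellipsoid controlled by Lemma~\ref{le:ellipsoids} --- is the same as the paper's (the paper phrases it as $S_d\cap H_v\cap H_{v'}$).

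Two small gaps worth flagging. First, you only prove the stated incidence bound for $\gamma=0$ and wave at a ``dyadic restriction'' for $\gamma>0$; in fact no extra work is needed, since all values of $\gamma$ reduce to the same $\Omega(n^2)$ by substituting $t=\Theta(n^{(d-4)/(d-1)+c/\lg\lg n})$ and $m=\Theta(n^{(d+1)/(d-1)})$ --- the paper itself obtains the $\gamma$-form by exactly this algebraic rewriting (note it produces $\gamma c/\lg\lg n$ rather than $c/\lg\lg n$, so the factor is only the advertised one for $\gamma\le 1$; a better presentation simply records $I=\Omega(n^2)$ and does the rewrite once). Second, your treatment of linearly-closed families assumes $S_d\in F$ and asserts this covers ``all the examples relevant when $d\ge 4$,'' but the lemma is claimed for \emph{every} linearly-closed family; the fix is what the paper does --- after reaching a point--hyperplane configuration via your shear $\phi$, apply the further shear $(x_1,\dots,x_d)\mapsto(x_1,\dots,x_{d-1},x_d+f(x_1,\dots,x_{d-1}))$ for the given $f$ defining a graph in $F$. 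With these two bits patched, your construction proves the lemma.
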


\subsection{Proving Lemma \ref{le:Intermediate}} \label{ssec:InterLemma}

We recall the definitions of $S_d$ and $S_{n,d}$ from Section \ref{sec:Fourier}.
We again define $\pts = S_{n,d} \cap \ZZ^d$ and recall that $|\pts|=\Theta(n)$.
By Lemma \ref{le: LowerEnergy} we have
\begin{equation} \label{eq:Lower4}
E(\pts)=\Omega\left(n^{3-2/(d-1)}\right).
\end{equation}

We will now reduce the problem of obtaining an upper bound for $E(\pts)$ to a point-hyperplane incidence problem.
The existence of various upper bounds for the maximum number of point-hyperplane incidences in $\RR^d$ would then imply an upper bound on $E(\pts)$ that contradicts \eqref{eq:Lower4}.
That is, we would get a contradiction to the existence of various point-hyperplane incidence upper bounds.
Such a contradiction implies that there exist point-hyperplane configurations with a large number of incidences (the full proof below is more constructive and also provides some information about how these configurations look like).

Consider points $a,b\in \pts$ such that $a+b=v\in\RR^d$.
We have
\begin{align*}
v_d &= a_1^2+\cdots+a_{d-1}^2 + b_1^2+\cdots+b_{d-1}^2 \\
        &= a_1^2+\cdots+a_{d-1}^2 + (v_1-a_1)^2+\cdots+(v_{d-1}-a_{d-1})^2 \\
        &= 2a_d -2a_1v_1-\cdots-2a_{d-1}v_{d-1} +\left(v_1^2+\cdots+v_{d-1}^2\right).
\end{align*}
That is, if $a,b\in \pts$ satisfy $a+b=v$ then $a$ and $b$ are both contained in the hyperplane $H_v$ that is defined by
\begin{equation} \label{eq:PiPlane}
2x_d -2x_1v_1-\cdots-2x_{d-1}v_{d-1} + \left(v_1^2+\cdots+v_{d-1}^2\right)-v_d=0.
\end{equation}

Let $\planes = \{H_v :\, v=a+b \text{ with } a,b\in \pts \}$.
We denote by $N_k$ the number of planes $H_v\in \planes$ that contain between $2^{k}$ and $2^{k+1}-1$ pairs $a,b\in \pts$ with $a+b=v$.
Since every pair of points of $\pts$ corresponds to a unique $v$, we have $\sum_k N_k \cdot 2^k = O(n^2)$.
For an $r$ that we will determine below, we have
\begin{equation} \label{eq:ESn4}
E(\pts) < \sum_{k=0}^{\lg n} N_k 2^{2(k+1)} = \sum_{k=0}^{r} N_k 2^{2(k+1)} + \sum_{k=r+1}^{\lg n} N_k 2^{2(k+1)}.
\end{equation}

By the above bound for $\sum_k N_k \cdot 2^k$, we have $\sum_{k=0}^{r} N_k 2^{2(k+1)} = O\left(n^2 2^r\right)$.
To handle the second sum, we notice that for any $a\in H_v$ there exists at most one $b\in \pts$ such that $a+b=v$.
Thus, the number of pairs in $H_v$ is at most $|H_v \cap \pts|/2$.
Assume that we have the bound
\begin{equation} \label{eq:NkHyp4}
N_k = O\left(\frac{n^\alpha}{2^{\beta k}}+\frac{n}{2^k}\right),
\end{equation}
for some $\beta> 2$ and $\alpha = (\beta d+ d-3\beta+1)/(d-1)$ (note that for any $\beta>2$ we have $\alpha>0$).
We then get that
\[ \sum_{k=r+1}^{\lg n} N_k 2^{2(k+1)} = \sum_{k=r+1}^{\lg n} O\left(\frac{n^{\alpha}}{2^{(\beta-2)k}}+n2^k\right) = O\left(\frac{n^{\alpha}}{2^{(\beta-2)r}}+n^2\right).\]

To optimize the bound in \eqref{eq:ESn4}, we need $r$ to satisfy $n^2 2^r = \Theta\left(n^{\alpha}/2^{(\beta-2)r}\right)$ (the term $n^2$ in the above bound is always subsumed by $n^2 2^r$).
Since $\beta>2$, we have that $n^{\alpha}/2^{(\beta-2)r}$ is decreasing in $r$, and thus the optimal value for $r$ is indeed obtained when the two bounds are equal.
That is, we set $r=\frac{2-\alpha}{1-\beta}\lg_2 n = \frac{d-3}{d-1}\lg_2 n$ and obtain
\begin{equation} \label{eq:EPupper}
E(\pts) < \sum_{k=0}^{r} N_k 2^{2(k+1)} + \sum_{k=r+1}^{\lg n} N_k 2^{2(k+1)} = O\left(n^{2+(d-3)/(d-1)}\right) = O\left(n^{3-2/(d-1)}\right).
\end{equation}

Notice that this upper bound for $E(\pts)$ matches the lower bound in \eqref{eq:Lower4}.
If no $k'$ satisfies $N_{k'} = \Omega\left(n^\alpha/2^{\beta k'}\right)$, we would be able to improve the above upper bound, which would contradict \eqref{eq:Lower4}.
Thus, such a $k'$ must exist.
Moreover, there must exist such a $k'$ with $|k'-r|=O(1)$.
Indeed, assume for contradiction that there exist functions $f_1(n)=\omega(1)$ and $f_2(n)=\omega(1)$ such that for every $k$ that satisfies $|k-r|\le f_1(n)$ we have $N_{k} = O\left(n^\alpha/(2^{\beta k}f_2(n))\right)$.
We set $f(n)= \min\{f_1(n),f_2(n)\}$, $r'=r-(2\beta-4)^{-1}\cdot \lg_2 f(n)$, and $r''= r+(2\beta-4)^{-1}\lg_2 f(n)$.
 By repeating the calculation in \eqref{eq:EPupper}, we obtain
\begin{align*}
E(\pts) &< \sum_{k=0}^{r'-1} N_k 2^{2(k+1)} + \sum_{k=r'}^{r''-1} N_k 2^{2(k+1)} +\sum_{k=r''}^{\lg n} N_k 2^{2(k+1)} \\
&= O\left(n^2 2^{r'} + \frac{n^\alpha}{2^{(\beta-2) r'}f(n)} + \frac{n^\alpha}{2^{(\beta-2) r''}}\right) \\
&= O\left(\frac{n^{3-2/(d-1)}}{f(n)^{1/(2\beta-4)}}+ \frac{n^{3-2/(d-1)}}{f(n)^{1/2}}+ \frac{n^{3-2/(d-1)}}{f(n)^{1/2}}\right) = o\left(n^{3-2/(d-1)}\right).
\end{align*}
This contradicts \eqref{eq:Lower4}, so there must exist $k'$ that satisfies $|k'-r|=O(1)$ and $N_{k'} = \Omega\left(n^\alpha/2^{\beta k'}\right)$.

Let $\planes'$ denote the set of hyperplanes $H_v \in \planes$ that contain between $2^{k'}$ and $2^{k'+1}-1$ pairs $a,b\in \pts$ with $a+b=v$. Notice that $N_{k'} = |\planes'|$.
Every hyperplane of $\planes'$ intersects $S_d$ in a $(d-2)$-dimensional ellipsoid.
(While there exist hyperplanes that intersect $S_d$ in a paraboloid, none of these are in $\planes'$. Indeed, notice that the coefficient of $x_d$ in \eqref{eq:PiPlane} is constant.)
Similarly, two hyperplanes of $\planes'$ intersect $S_{d}$ in an ellipsoid of dimension at most $d-3$.
By inspecting the defining equation \eqref{eq:PiPlane} of the hyperplanes of $\planes'$, we notice that all of the coefficients in it are integers of absolute value $O_d(n^{2/(d-1)})$.
Thus, the ellipsoids that are obtained from such intersections are $n$-proper.
By Lemma \ref{le:ellipsoids}, the intersection of two hyperplanes of $\planes'$ cannot contain $t=\Theta\left(n^{(d-4)/(d-1)+ c/\lg \lg n}\right)$ points of $\pts$.
In other words, the incidence graph of $\pts \times \planes'$ contains no copy of $K_{t,2}$.

We use the standard point-hyperplane duality on $\pts$ and $\planes'$ to obtain a configuration with no $K_{2,t}$ in the incidence graph. We denote the new point set as $\planes^*$ and the new set of hyperplanes as $\pts^*$.
With this new notation, we have $|\pts^*|=\Theta(n)$, $|\planes^*|= \Omega_d\left(n^\alpha/2^{\beta k'}\right)$, and that every point of $\planes^*$ is incident to at least $2^{k'}$ hyperplanes of $\pts^*$.
We arbitrarily remove points from $\planes^*$ to  obtain $|\planes^*|= \Theta_d\left(n^\alpha/2^{\beta k'}\right)$. This implies
\begin{align*}
I(\planes^*,\pts^*) &= \Omega\left(|\planes^*| \cdot 2^{k'}\right) = \Omega_d\left(|\planes^*|^{(\beta-1)/\beta} \left(n^\alpha/2^{\beta k'}\right)^{1/\beta}\cdot 2^{k'}\right) \\
&= \Omega_d\left(|\planes^*|^{(\beta-1)/\beta} |\pts^*|^{\alpha/\beta}\right) = \Omega_d\left(|\planes^*|^{(\beta-1)/\beta} |\pts^*|^{\left(\alpha-\gamma\frac{d-4}{d-1}-\frac{\gamma c}{\lg \lg n}\right)/\beta}t^{\gamma/\beta}\right).
\end{align*}

It remains to bound the size of $|\planes^*|$. Recall  that $|\planes^*|= \Theta\left(n^\alpha/2^{\beta k'}\right)$.
Moreover, we know that $2^{k'} = \Theta\left(2^r\right) = \Theta\left(n^{(d-3)/(d-1)}\right)$. We conclude that
\[ |\planes^*| = \Theta\left(n^\alpha/n^{\beta(d-3)(d-1)}\right) = \Theta\left(n^{\frac{\beta d+ d-3\beta+1}{d-1}-\frac{\beta d-3\beta}{d-1}}\right)=\Theta\left(n^{(d+1)/(d-1)}\right). \]
This completes the case of incidences with hyperplanes.

\parag{Other hypersurfaces}
We now show how to apply the above argument to other types of hypersurfaces.
The case of hyperspheres is obtained by a simple use of the \emph{inversion transformation} around the origin $\rho_d:\RR^d \to \RR^d$ (e.g., the planar case can be found in \cite[Section 37]{Hart00}).  The transformation $\rho_d(\cdot)$ maps the point
$p=(x_1,\ldots,x_d)\neq (0,\ldots,0)$ to the point $\rho_d(p)=(\bar{x}_1,\ldots,\bar{x}_d)$,
where
\[ \bar{x}_i = \frac{x_i}{x_1^2+\cdots+x_d^2},\quad i=1,\ldots,d. \]

If $h\subset \RR^d$ is a hyperplane that is not incident to the origin then $\rho_d(h)$ is a hypersphere that is incident to the origin.
This property is easy to verify after noticing that $\rho_d(\cdot)$ is its own inverse.
Another important observation is that a point $p\in \RR^d \setminus \{0\}$ is incident to an object $V \subset \RR^d$ if and only if $\rho_d(p)$ is incident to $\rho_d(V)$.

We consider the sets $\planes^*$ and $\pts^*$ that were obtained above, and perform a translation of $\RR^d$ so that no plane of $\pts^*$ is incident to the origin. We then set $\pts'' = \rho_d(\planes^*)$ and $\spheres = \rho_d(\pts^*)$, and notice that $\spheres$ is a set of hyperspheres. As before, we have $|\spheres|=\Theta(n)$, $|\pts''| = \Theta\left(n^{(d+1)/(d-1)}\right)$, no $K_{2,t}$ in the incidence graph of $\pts''\times \spheres$, and
\[ I(\pts'',\spheres) =\Omega\left(|\pts'|^{(\beta-1)/\beta} |\planes'|^{\left(\alpha-\gamma\frac{d-4}{d-1}-\frac{\gamma c}{\lg \lg n}\right)/\beta}t^{\gamma/\beta}\right). \]

We next consider the case of a linearly-closed family $F$ of graphs.
Consider a function $f:\RR^{d-1}\to \RR$ such that $x_d= f(x_1,\ldots,x_{d-1})$ defines a graph of $F$.
We rely on a technique that was introduced by J\'ozsef Solymosi (and should appear in \cite{SolySza}).
We begin with a point-hyperplane configuration in $\RR^d$, as described above.
We consider the bijection $\phi(x_1,x_2,\ldots,x_d) =(x_1,\ldots,x_{d-1},x_d+f(x_1,\ldots,x_{d-1}))$ from $\RR^d$ to itself, and apply $\phi$ on the above point-hyperplane configuration.
Notice that $\phi$ is a bijection since $\phi^{-1}(x_1,x_2,\ldots,x_d) =(x_1,\ldots,x_{d-1},x_d-f(x_1,\ldots,x_{d-1}))$.
Thus, $\phi$ maintains the incidence structure of the point-hyperplane configuration.

Consider a hyperplane $H$ that is defined by the equation $x_d = a_1x_1+\cdots + a_{d-1}x_{d-1}$ (all of the hyperplanes in our point-hyperplane configuration can be defined like this), and notice that $\phi(H)$ is defined by $x_d = a_1x_1+\cdots + a_{d-1}x_{d-1} +f(x_1,\ldots,x_{d-1})$.
Since $F$ is linearly-closed, $\phi(H)\in F$.
That is, we obtain a configuration of points and graphs from $F$ with the same number of incidences and no $K_{2,t}$ in the incidence graph.

\subsection{Using Lemma \ref{le:Intermediate}} \label{ssec:Chefnoff}

For our analysis we require several Chernoff bounds (e.g., see \cite{CL06} and the last equation of \cite{HR90}).

\begin{lemma}[Chernoff bounds] \label{le:Chernoff}
(a) Given $0 < p < 1$, for every $1\le i \le n$ let $X_i$ be a random variable that equals 1 with probability $p$ and otherwise equals 0. Moreover, let these $n$ variables be pairwise independent.
Let $X= \sum_{i=1}^n X_i$, so $E[X] = \sum_{i=1}^n E[X_i] = pn$.
Let $k$ be an integer larger than $pn$. Then for any $\lambda,k>0$
\begin{align*}
\Pr[X\ge pn+\lambda] &\le e^{\frac{-\lambda^2}{2pn+2\lambda/3}}, \\[2mm]
\Pr[X\le pn-\lambda] &\le e^{-\lambda^2/2pn}, \\[2mm]
\Pr[X\ge k] &\le \left(\frac{np}{k}\right)^k e^{k-np}.
\end{align*}

(b) Given $0 < p < 1$, for every $1\le i \le n$ let $X_i$ be a random variable that equals $m_i$ with probability $p$ and otherwise equals 0. Moreover, let these $n$ variables be pairwise independent.
Let $X= \sum_{i=1}^n X_i$, so $E[X] = \sum_{i=1}^n E[X_i] = p\sum_{i=1}^n m_i$. Then for any $\lambda>0$
\begin{align*}
\Pr[X\le E[X]-\lambda] &\le e^{\frac{-\lambda^2}{2\sum_{i=1}^n E[X_i^2]}}.
\end{align*}
\end{lemma}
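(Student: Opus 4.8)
The plan is to obtain all four inequalities by the standard exponential-moment (``Chernoff--Bernstein'') method: apply Markov's inequality to $e^{tX}$ (for upper tails) or to $e^{-tX}$ (for lower tails) with $t>0$, factor the moment generating function over the $X_i$, bound each factor, and optimize over $t$. These inequalities are classical and are recorded in \cite{CL06,HR90}; below I only indicate the computations.

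For the first inequality of part (a): if $\Pr[X_i=1]=p$ then $E[e^{tX_i}]=1+p(e^t-1)\le\exp\big(p(e^t-1)\big)$, so $E[e^{tX}]\le\exp\big(pn(e^t-1)\big)$. Hence $\Pr[X\ge pn+\lambda]\le e^{-t(pn+\lambda)}E[e^{tX}]\le \exp\big(pn(e^t-1-t)-t\lambda\big)$, and using the elementary estimate $e^t-1-t\le \frac{t^2/2}{1-t/3}$ for $0\le t<3$ followed by the optimal choice of $t$ yields the Bernstein-type bound $e^{-\lambda^2/(2pn+2\lambda/3)}$. The second inequality follows symmetrically from $E[e^{-tX}]\le\exp\big(pn(e^{-t}-1)\big)$ and $e^{-t}-1+t\le t^2/2$, whose optimization over $t$ gives $e^{-\lambda^2/(2pn)}$. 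The third is the multiplicative Chernoff bound: from $E[e^{tX}]\le\exp\big(pn(e^t-1)\big)$ and Markov, $\Pr[X\ge k]\le e^{-tk}\exp\big(pn(e^t-1)\big)$, and the choice $e^t=k/(pn)$ (admissible since $k>pn$) produces $\big(pn/k\big)^k e^{k-pn}$.

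For part (b): each $X_i\ge 0$, so the estimate $e^{-x}\le 1-x+\tfrac{x^2}{2}$ for $x\ge 0$ gives $E[e^{-tX_i}]\le 1-tE[X_i]+\tfrac{t^2}{2}E[X_i^2]\le\exp\big(-tE[X_i]+\tfrac{t^2}{2}E[X_i^2]\big)$, whence $E[e^{-tX}]\le\exp\big(-tE[X]+\tfrac{t^2}{2}\sum_i E[X_i^2]\big)$. Then $\Pr[X\le E[X]-\lambda]\le e^{t(E[X]-\lambda)}E[e^{-tX}]\le\exp\big(-t\lambda+\tfrac{t^2}{2}\sum_i E[X_i^2]\big)$, and $t=\lambda/\sum_i E[X_i^2]$ gives the stated bound.

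The only step requiring care is the factorization $E[e^{\pm tX}]=\prod_i E[e^{\pm tX_i}]$, which is where independence of the $X_i$ is used; in every invocation of this lemma in Section \ref{ssec:Chefnoff} the variables are the (mutually independent) indicators of whether a fixed object survives an independent random sample, so the step is legitimate there. Apart from this there is no real obstacle: the remaining work is the routine verification of the one-variable inequalities $e^t-1-t\le \frac{t^2/2}{1-t/3}$, $e^{-t}-1+t\le t^2/2$, and $e^{-x}\le 1-x+\tfrac{x^2}{2}$, together with the bookkeeping of the optimization in $t$ — exactly the calculations carried out in \cite{HR90}.
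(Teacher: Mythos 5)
Your derivation is correct and standard; note, however, that the paper itself offers no proof of this lemma --- it simply points to \cite{CL06} and the last equation of \cite{HR90} --- so there is no internal argument to compare against, and what you wrote is essentially the proof one would find in those references. The one substantive point, which you handled exactly right, concerns the hypothesis: the lemma as printed says ``pairwise independent,'' but the factorization $E[e^{\pm tX}]=\prod_i E[e^{\pm tX_i}]$ (and indeed any sub-exponential tail bound of this type) genuinely requires \emph{mutual} independence; with only pairwise independence one cannot do better than Chebyshev-type polynomial tails. This is almost certainly a slip in the paper's wording rather than an intended weakening, and as you observe, every invocation in Section \ref{ssec:Chefnoff} is to indicators of independent Bernoulli$(p)$ retention events, which are mutually independent, so nothing downstream is affected --- still, ``pairwise'' in the statement should really read ``mutually.'' The three elementary one-variable inequalities you invoke, namely $e^t-1-t\le \frac{t^2/2}{1-t/3}$ for $0\le t<3$, $e^{-t}-1+t\le t^2/2$ for $t\ge 0$, and $e^{-x}\le 1-x+\tfrac{x^2}{2}$ for $x\ge 0$, are all correct, and the optimizing choices of $t$ you indicate do produce the three stated bounds, with the choice $e^t=k/(np)$ in the third requiring $k>np$ exactly as you note.
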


For simplicity we consider a set $\planes$ of hyperplanes.
The following analysis remains valid after replacing the hyperplanes with a different type of objects.

From Lemma \ref{le:Intermediate} we have a set $\pts$ of $m=\Theta\left(n^{(d+1)/(d-1)}\right)$ points and a set $\planes$ of $a \cdot n$ hyperplanes (for some constant $a$), both in $\RR^d$, such that the incidence graph of $\pts\times \planes$ contains no copy of $K_{2,t}$ with $t=\Theta\left(n^{(d-4)/(d-1)+c/\lg\lg n}\right)$, and with
\[ I(\pts,\planes)=\Omega\left(m^{(\beta-1)/\beta} n^{\left(\alpha-\gamma\frac{d-4}{d-1}-\frac{\gamma c}{\lg\lg n}\right)/\beta}t^{\gamma/\beta}\right). \]

Set $p=1/(t n^\eps)$ and let $\vrts'$ be a set that is obtained by taking any element of $\vrts$ with probability $p$.
By lemma \ref{le:Chernoff}, we have\begin{itemize}
\item The probability of $|\vrts'|\le anp/10$ is at most $e^{-0.9^2a^2p^2n^2/2apn}<e^{-0.4apn}$.
\item The probability of $|\vrts'|\ge 10anp$ is at most $e^{\frac{-81a^2p^2n^2}{2pn+6apn}}<e^{-10pn\cdot \min\{a,a^2\}}$.
 \item The probability that a given pair of points of $\pts$ is fully contained in at least $3/\eps$ hyperplanes of $\vrts'$ is smaller than
     \[ \left(\frac{tp}{3/\eps}\right)^{3/\eps} e^{3/\eps-tp}<\left(\frac{\eps}{3n^\eps}\right)^{3/\eps}e^{3/\eps}<\frac{e^{3/\eps}}{n^3}. \]
\item The probability that no pair of points of $\pts$ is fully contained in $3/\eps$ hyperplanes of $\vrts'$ is smaller than $n^2 \frac{e^{3/\eps}}{n^3} < \frac{e^{3/\eps}}{n}$.
\item The probability that $I(\pts,\vrts') \le I(\pts,\vrts)p/100$ is at most
\begin{align*} e^{\frac{-(99\cdot I(\pts,\vrts)p/100)^2}{2\cdot I(\pts,\vrts)mp}} &< e^{\frac{-0.49 p \cdot I(\pts,\vrts)}{m}} = e^{-0.49 n^{\frac{\alpha}{\beta}-\eps-\frac{d+1}{(d-1)\beta}-\frac{d-4}{d-1}-\frac{c}{\lg\lg n}}} \\
& = e^{-0.49 n^{\frac{1}{(d-1)}-\eps-\frac{c}{\lg\lg n}}} < e^{-0.49 n^{\frac{1}{d-1}-2\eps}}.
\end{align*}
 This bound is obtained from part (b) of Lemma \ref{le:Chernoff}. By taking $m_i$ to be the number of incidences on the $i$'th hyperplane of $\vrts$ we have $m_i \le m$, so $\sum_{i=1}^n E[X_i^2] < I(\pts,\vrts)mp$. We also assume that $n$ is sufficiently large so that $c/\lg\lg n <\eps$.
 \end{itemize}

By taking sufficiently large $n$, all of the above probabilities are smaller than $10^{-10}$. That is, with probability of at least $1-4\cdot 10^{-10}$ we have that $|\vrts'|=\Theta(np)$, that no pair of points of $\pts$ is fully contained in $3/\eps$ elements of $\vrts'$, and that $I(\pts,\vrts') > I(\pts,\vrts')p/100$. Thus, there must exist a set $\vrts'$ that satisfies all of these properties, and we consider such a set. Notice that the incidence graph of $\pts\times\vrts'$ contains no copy of $K_{2,3/\eps}$, that $|\vrts'| = \Theta\left(n^{3/(d-1)-\eps}\right)$, and that
\begin{align*}  I(\pts,\vrts') &> \frac{p}{100} \cdot I(\pts,\vrts) =\Omega\left(\frac{\left(n^{(d+1)/(d-1)}\right)^{(\beta-1)/\beta} n^{\alpha/\beta}}{n^{(d-4)/(d-1)+\eps}}\right) \\[2mm]
& = \Omega\left(n^{\frac{d+1}{d-1}\frac{\beta-1}{\beta}+\frac{\alpha}{\beta}-\frac{d-4}{d-1}-\eps}\right) = \Omega\left(n^{\frac{(d+1)(\beta-1)+(\beta d+ d-3\beta+1)-\beta(d-4)}{\beta(d-1)}-\eps}\right) \\[2mm]
&=\Omega\left(n^{(d+2)/(d-1)-\eps}\right) = \Omega\left(|\pts|^{\delta}|\vrts'|^{\left(\frac{d+2}{d-1}-\frac{\delta(d+1)}{d-1}-\eps\right)\frac{d-1}{3}}\right) \\[2mm]
&= \Omega\left(|\pts|^{\delta}|\vrts'|^{(d+2-\delta(d+1)-\eps(d-1))/3}\right).
\end{align*}

Note that $|\planes'| = \Theta\left(n^{3/(d-1)-\eps-c/\lg\lg n}\right)= \Theta\left(m^{\frac{3-(\eps-c/\lg\lg m) (d-1)}{d+1}}\right)$. Replacing $\eps$ with $(d-1)\eps/3$ yields the assertion of Theorem \ref{th:GeneralBound} (the term $c/ \lg\lg m$ is negligible with respect to $\eps$, so we ignore it).

\section*{Acknowledgments} 
The author is indebted to Ciprian Demeter, who provided the initial inspiration for this work.
He would like to thank Prabath Silva, J\'ozsef Solymosi, Joshua Zahl, and Frank de Zeeuw for several helpful discussions.
He would also like to thank the anonymous referees for carefully reading the paper and helping to improve it.

\bibliographystyle{amsplain}


\begin{dajauthors}
\begin{authorinfo}[pgom]
  Adam Sheffer\\
  California Institute of Technology\\
  Pasadena, CA\\
  USA\\
  \href{mailto:adamsh@gmail.com}{adamsh@gmail.com}
\end{authorinfo}
\end{dajauthors}

\end{document}